\documentclass[12pt,a4paper]{amsart}
\usepackage{amsfonts}
\usepackage{amsthm}
\usepackage{amsmath}
\usepackage{amssymb}
\usepackage{amscd}
\usepackage{t1enc}
\usepackage[mathscr]{eucal}
\usepackage{indentfirst}
\usepackage{graphicx}
\usepackage{graphics}
\numberwithin{equation}{section}

     \addtolength{\textwidth}{2 truecm}
    \addtolength{\textheight}{0 truecm}
     \setlength{\voffset}{-.6 truecm}
     \setlength{\hoffset}{-1.3 truecm}

\begin{document}

\title{The density Tur\'an problem}

\author[P. Csikv\'ari]{P\'{e}ter Csikv\'{a}ri}
\address{E\"{o}tv\"{o}s Lor\'{a}nd University \\ Department of Computer Science
 \\ H-1117 Budapest
 \\ P\'{a}zm\'{a}ny P\'{e}ter s\'{e}t\'{a}ny 1/C \\ Hungary \& Alfr\'ed R\'enyi
 Institute of 
 Mathematics \\ H-1053 Budapest \\ Re\'altanoda u. 13-15. \\ Hungary}
\email{csiki@cs.elte.hu}

\author[Z. L. Nagy]{Zolt\'an L\'or\'ant Nagy}

\address{E\"{o}tv\"{o}s Lor\'{a}nd University \\ Department of Computer Science
 \\ H-1117 Budapest
 \\ P\'{a}zm\'{a}ny P\'{e}ter s\'{e}t\'{a}ny 1/C \\ Hungary \& Alfr\'ed R\'enyi
 Institute of 
 Mathematics \\ H-1053 Budapest \\ Re\'altanoda u. 13-15. \\ Hungary}
\email{nagyzoltanlorant@gmail.com}

\thanks{Corresponding author: Zolt\'an L\'or\'ant Nagy}

\thanks{The research  was partially supported by the
  Hungarian National Foundation for Scientific Research (OTKA), Grant no. 
K67676 and K81310}

 \subjclass[2000]{Primary: 05C35. Secondary: 05C42, 05C31}

 \keywords{Tur\'an problem, density condition, Lov\'asz local lemma, matching
   polynomial}

\date{}

\addtolength{\textheight}{2cm}

\theoremstyle{plain}
\newtheorem{theorem}{\bf Theorem}[section]
\newtheorem{lemma}[theorem]{\bf Lemma}
\newtheorem{cor}[theorem]{\bf Corollary}
\newtheorem{prop}[theorem]{\bf Proposition}
\newtheorem{conj}[theorem]{\bf Conjecture}
\newtheorem{claim}[theorem]{\bf Claim}
\newtheorem{construction}[theorem]{\bf Construction}
\newtheorem{Al}[theorem]{\bf Algorithm}
\newtheorem{counte}[theorem]{\bf Counterexample}

\theoremstyle{definition}
\newtheorem{example}[theorem]{\bf Example}
\newtheorem{remark}[theorem]{\bf Remark}
\newtheorem{defn}[theorem]{\bf Definition}
\newtheorem{problem}[theorem]{\bf Problem}
\newtheorem{obs}[theorem]{\bf Observation}
\newtheorem{nota}[theorem]{\bf Notation}

\begin{abstract} Let $H$ be a graph on $n$ vertices and let the blow-up graph
  $G[H]$ be defined as follows. We replace each vertex $v_i$ of $H$ by a cluster
  $A_i$ and connect some pairs of vertices of $A_i$ and $A_j$ if $(v_i,v_j)$ was
  an edge of the graph $H$.  As usual, we
define the edge density between $A_i$ and $A_j$ as
$d(A_i,A_j)=\frac{e(A_i,A_j)}{|A_i||A_j|}.$
We study the following problem. Given densities $\gamma_{ij}$  for each edge
$(i,j)\in E(H)$. Then one has
to decide whether there exists a blow-up graph $G[H]$ with edge densities at
least  $\gamma_{ij}$ such that one cannot choose a vertex from each cluster so
that the obtained graph is isomorphic to $H$, i.e, no $H$ appears as a
transversal in $G[H]$.  We call $d_{crit}(H)$ the
maximal value for which there exists a blow-up graph $G[H]$ with edge
densities $d(A_i,A_j)=d_{crit}(H)$ $((v_i,v_j)\in E(H))$ not containing $H$ in the
above sense. Our main goal is to determine the critical edge density and to
characterize the extremal graphs.

First in the case of tree $T$ we give an efficient algorithm to decide
whether a  given set of edge densities ensures the existence of a transversal
$T$ in the blown up graph. Then we give general bounds on $d(H)$ in terms of
the maximal degree. In connection  with the extremal structure, the so-called
star  decomposition  is proven to give the best construction for
$H$-transversal-free blow-up graphs for several graph classes. 

Our approach applies algebraic graph theoretical, combinatorial and
probabilistic tools. 
\end{abstract}

\maketitle

\section{Introduction}

Given a simple, connected  graph $H$, we define a blow-up graph $G[H]$ of $H$
as follows.  Replace each
vertex $v_i\in V(H)$ by a cluster $A_i$ and connect vertices between the
clusters $A_i$ and $A_j$ (not necessarily all) if $v_i$ and $v_j$ were adjacent
in $H$.  As usual, we
define the density between $A_i$ and $A_j$ as
$$d(A_i,A_j)=\frac{e(A_i,A_j)}{|A_i||A_j|},$$
where $e(A_i,A_j)$ denotes the number of edges between the clusters $A_i$ and
$A_j$. We say that the graph $H$ is a transversal of $G[H]$ if $H$ is a
subgraph of $G[H]$ such that we have a homomorphism $\varphi:
V(H)\rightarrow V(G[H])$ for which $\varphi(v_i)\in A_i$ for all
$v_i\in V(H)$.  We will
also use the terminology that $H$ is a factor of $G[H]$. 

The density Tur\'an problem asks to determine the critical edge density
$d_{crit}$ which ensures the existence of the subgraph $H$ of $G[H]$ as a
transversal. What does it mean? Assume that for
all $e=(v_i,v_j) \in E(H)$ we have $d(A_i,A_j)>d_{crit}$ then no matter how the
graph $G[H]$ looks like, it induces the graph $H$ as a transversal. On
the other hand, for any $d<d_{crit}(H)$ there exists a blow-up graph $G[H]$
such that $d(A_i,A_j)>d$ for all $(v_i,v_j)\in E(H)$ and it does not contain $H$
as a transversal. Clearly, the critical edge density of the graph $H$ is the
largest one of the critical edge densities of its components. Thus we 
 will assume that $H$ is a connected graph throughout the paper.

The  problem in view was studied in  \cite{nagy1}. A very closely related
variant of this problem was mentioned in the book Extremal Graph Theory
\cite{bol1} on page 324.  There are many papers where density condition is
replaced by minimal degree constraint \cite{bol2,bol3,jin,yuster}.
\bigskip

It will turn out that it is useful to consider the following more general
problem. Assume that a density $\gamma_e$ is given for every edge $e\in
E(H)$. Now the problem is to decide whether the densities $\{\gamma_e\}$
ensure the existence of the subgraph $H$ as a transversal or one can construct
a blow-up graph $G[H]$ such that $d(A_i,A_j)\geq \gamma_{ij}$, yet the graph
$H$ does not appear in $G[H]$ as a transversal. This more general approach
allows us to use inductive proofs.
We refer to this general setting as inhomogeneous condition on the edge
densities while the above condition of having a common lower bound
$d_{crit}(H)$ for the densities is called the homogeneous case. 

It will turn out that an even more general setting is worth considering,
that is, to consider weighted blow-up graphs (see Section 2). 

The paper is organized as follows. We end this introductory section by setting
down the notations. In Section 2 we introduce the most important concepts
through an example and  we sketch the main results of
\cite{nagy1} and some useful lemmas. Section 3 is devoted to the case when $H$
is a tree. This case is covered in \cite{nagy1} in the homogeneous case, showing
that $$d_{crit}(T)=1-\frac{1}{\lambda^2_{max}(T)},$$ where $\lambda_{max}(T)$
denotes the maximal 
eigenvalue of the adjacency matrix of the tree. In the inhomogeneous case a
set of edge densities is given for a blown up graph $T_n$, and we have to
decide whether the edge densities ensure the existence of a factor $T_n$. 
We give an efficient algorithm to do it. The proof is based on the strong
connection with the multivariate matching polynomial. 
In Section 4 by the application of the Lov\'asz local lemma and its extension,
it is shown that $$d_{crit}(H)<1-\frac{1}{4(\Delta(H)-1)},$$ where $\Delta(H)$
is the maximal degree of $H$. 
The extremal structures are investigated in Section 5. Here we give a recursive
construction for blown up graphs not containing the corresponding transversal,
and examine for which classes of graphs it gives the extremal structure. These
constructions also give lower bounds for the critical edge density in the
homogeneous case. 
\bigskip

\centerline{$\star$\ \ \ \ $\star$\ \ \ \ $\star$}
\bigskip

Throughout the paper, we use the following notation.

\begin{nota}

$H=(V(H), E(H))$ will be a connected graph on the labeled vertices $\{ 1,\dots
,n\}$.  

$G[H]$ denotes a blown up graph of $H$ on $n$ clusters, where the cluster
$A_i$ corresponds to the vertex $i\in V(H)$. If all densities equal $1$ in
$G[H]$ then we call it a complete blown up graph of $H$. 

Graph $S_n, P_n, C_n$ denote the star, the path and the cycle on $n$
vertices, respectively. As usual, $K_n$ and  $K_{m,n}$ denote the complete,
and the complete bipartite graphs. $T_n$ denotes an arbitrary tree on $n$
vertices. 

$d_{crit}(H)$ is the critical edge density assigned to $H$, while $d_e$ is the
edge density between $A_i$ and $A_j$ if $e=ij \in E(H)$. 

$\Delta (H)$ will denote the maximum degree in $H$, $D_i$ the degree of vertex
$i$, while $N(z)$ will denote the neighborhood of vertex $z$.   
\end{nota}

Now we define the weighted version of the well-known independence and matching
polynomials. 

\begin{nota}

Let $G$ be a graph and assume that a positive
  weight function $w:V(G)\rightarrow \mathbb{R^+}$ is given. Then let 
$$I((G,\underline{w});t)=\sum_{S\in \mathcal{I}}\left( \prod_{u\in
    S}w_u\right)(-t)^{|S|},$$ 
where the summation goes over the set $\mathcal{I}$ of all independent sets $S$
of the graph $G$ including the empty set. When $\underline{w}=\underline{1}$
we simply write $I(G,t)$ instead of $I((G,\underline{1});t)$ and we call
$I(G,t)$ the independence polynomial of $G$.  Clearly,
$$I(G,t)=\sum_{k=1}^ni_k(G)(-1)^kt^k,$$
where $i_k(G)$ denotes the number of independent sets of size $k$ in the graph
$G$.

Let $G$ be a graph and assume that a positive
  weight function $w:E(G)\rightarrow \mathbb{R^+}$ is given. Then let 
$$M((G,\underline{w});t)=\sum_{S\in \mathcal{M}}\left( \prod_{e\in
    S}w_e\right)(-1)^{|S|}t^{n-2|S|},$$ 
where the summation goes over the set $\mathcal{M}$ of all independent edge
sets $S$ of the graph $G$ including the empty set. In the case when
$\underline{w}=\underline{1}$ we call the polynomial
$$M(G,t)=M((G,\underline{1});t)=\sum_{k=0}^{n/2}(-1)^km_k(G)t^{n-2k}$$
the matching polynomial of $G$, where $m_k(G)$ denotes the number of $k$
independent edges (i.e., the $k$-matchings) in the graph $G$.

A closely related variant of the weighted matching polynomial is the
multivariate matching polynomial defined as follows. Let $x_e$'s be variables
assigned to each edge of a graph. The \textit{multivariate matching
  polynomial}  $F$ is defined as follows: 
$$F(\underline{x_e},t)=\sum_{M\in \mathcal{M}}(\prod_{e\in M}x_e)(-t)^{|M|},$$
where the summation goes over the matchings of the graph including the empty
matching.

Clearly, if $L_G$ denotes the line graph of the graph $G$ we have 
$$F(\underline{x_e},t)=I((L_G,\underline{x_e});t)$$
or in other words,
$$t^nF(\underline{x_e},\frac{1}{t^2})=M((G,\underline{x_e});t).$$
\end{nota}

\section{Preliminaries} 

In this section we motivate some key definitions through an example graph.\\
The diamond is the unique simple graph on $4$ vertices and $5$ edges,
generally denoted by $K_4^{-}$. 

\begin{figure}[ht]
\begin{minipage}[b]{0.25\linewidth}
\begin{center}
\scalebox{.40}{\includegraphics{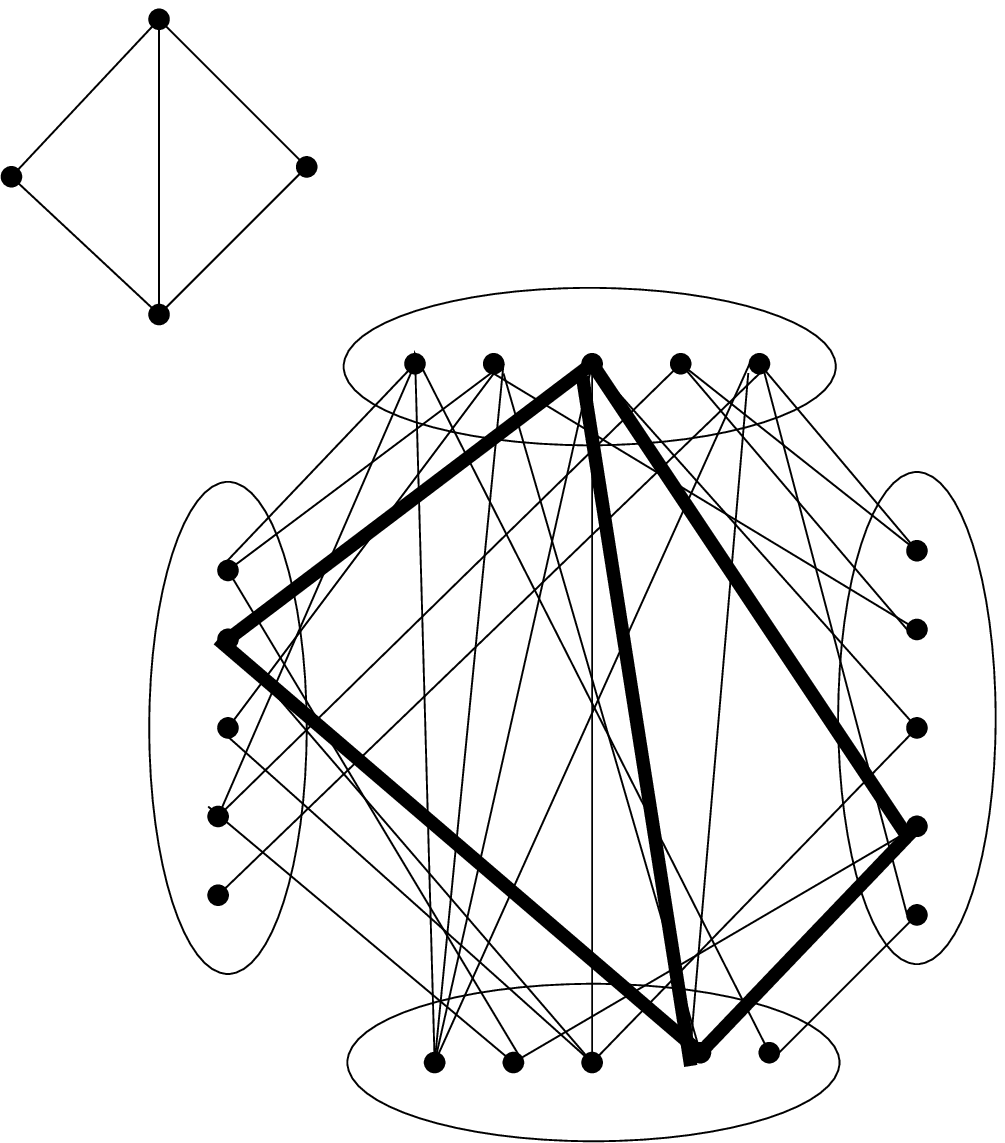}}  
\end{center}
\end{minipage}
\hspace{1cm}
\begin{minipage}[b]{0.25\linewidth}
\begin{center}
\scalebox{.40}{\includegraphics{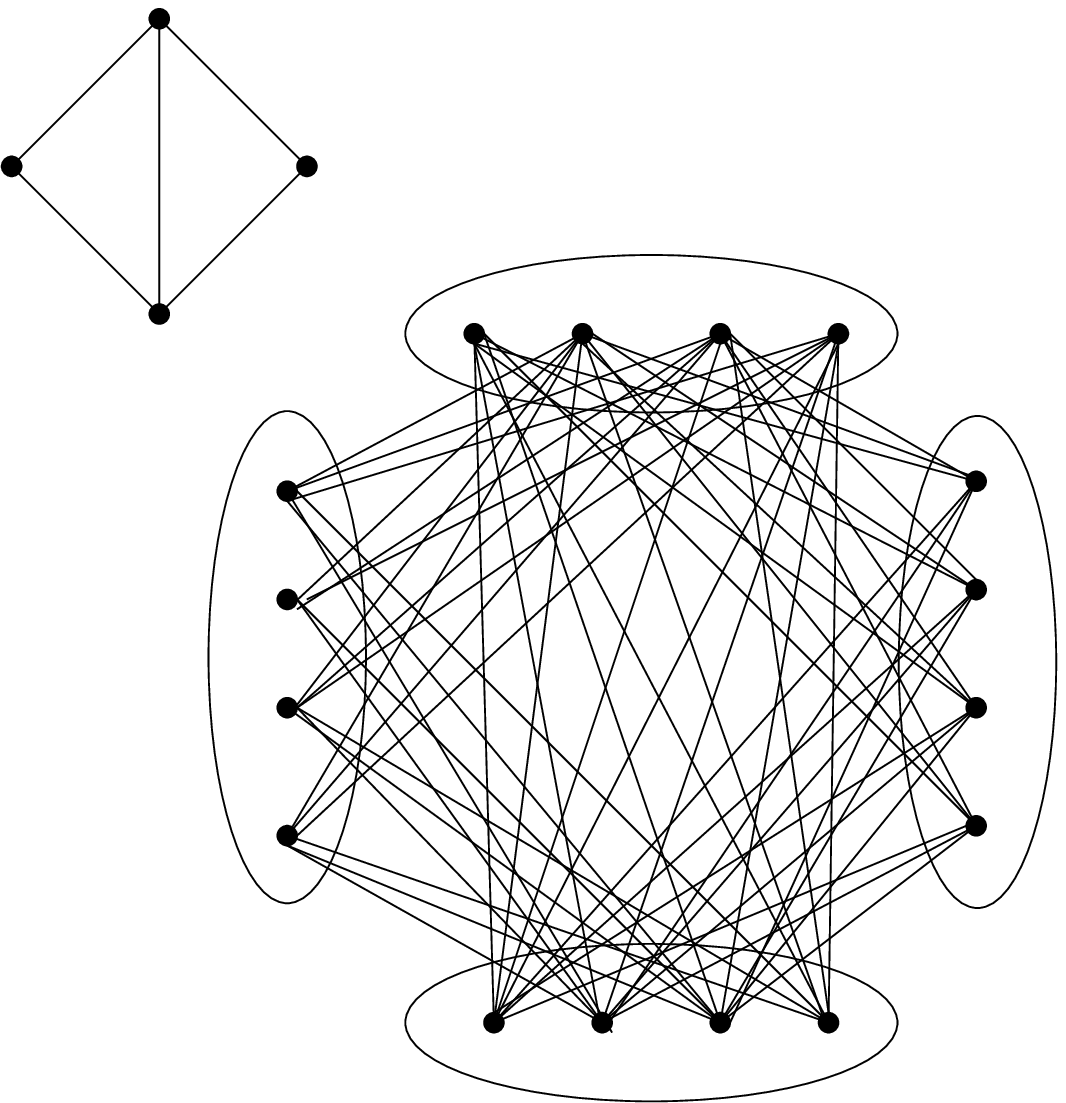}} 
\end{center}  
\end{minipage}
\hspace{1cm}
\begin{minipage}[b]{0.25\linewidth}
\begin{center}
\scalebox{.40}{\includegraphics{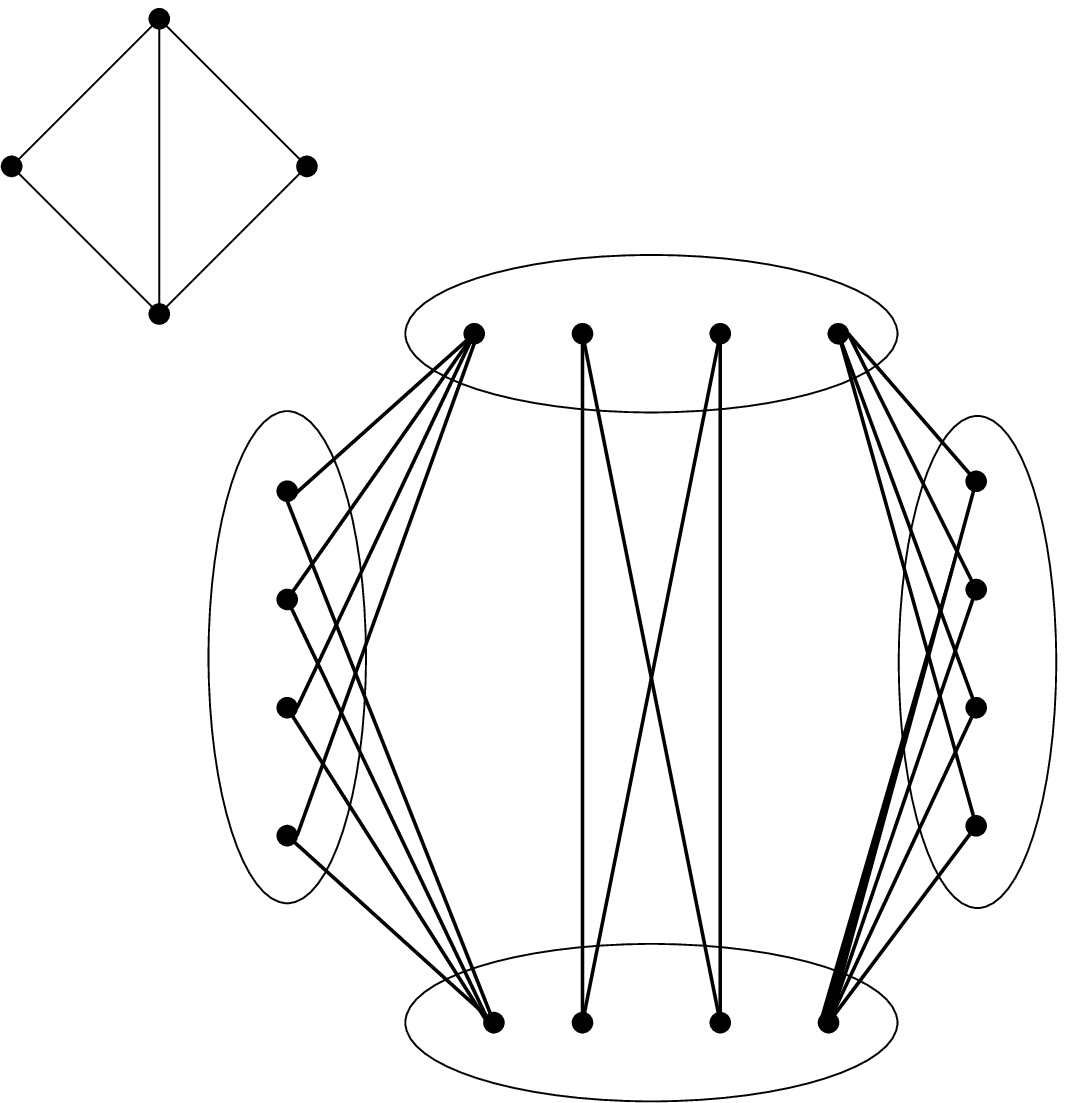}}  
\end{center}
\end{minipage}

\caption{Blow-up graphs of diamonds.} 

\end{figure}

\bigskip

In the  figure above, the first blow-up graph of the diamond contains the
diamond as a transversal. The second blow-up graph does not contain the
diamond as a transversal although the edge density is $3/4$ between any two
clusters. To see it, we gave the \textit{complement of the blow-up
  graph with respect to   the complete blow-up graph}; in what follows we
will simply call this graph the \textit{complement graph} and we will denote
it by $\overline{G[H]|H}$. In the ``complement language'' the claim is the 
following: if one chooses one vertex from each cluster then we cannot avoid
choosing both ends of a complementary edge. This is true indeed: whichever
vertex we  choose from the ``right'' and ``left'' clusters we cannot choose
the rightmost and leftmost vertices of the upmost and downmost clusters; so we
have to choose a vertex from the middle of these clusters, but they are all
connected by complementary edges. 

We also see that this construction was a bit redundant in the sense that each
vertex from the right and left clusters had the same role. This motivates the
following definition.

\begin{figure}[h!]
\begin{center}
\scalebox{.55}{\includegraphics{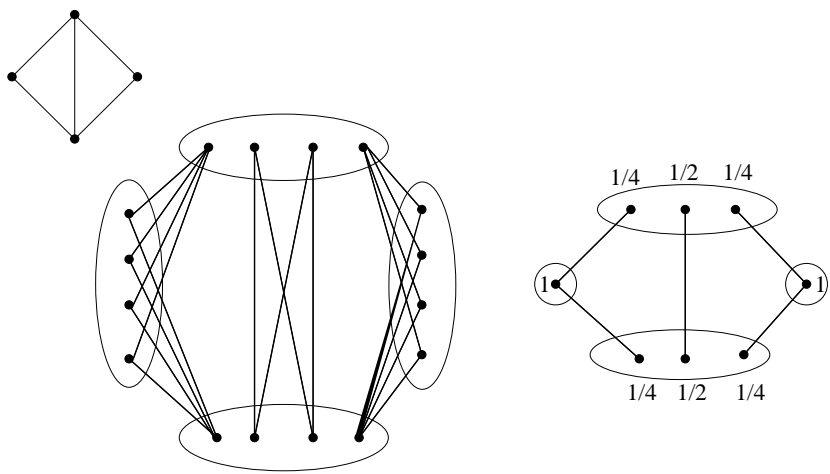}}   \caption{Weighted
  blow-up graph.} 
\end{center}
\end{figure} 
    
\begin{defn} A weighted blow-up graph is a blow-up graph where a non-negative
  weight $w(u)$ is assigned to each vertex $u$ such that the total weight of
  each cluster is $1$. The density  between clusters $A_i$ and $A_j$ is
$$d_{ij}=\sum_{(u,v)\in E \atop u\in A_i,v\in A_j}w(u)w(v).$$
\end{defn}

This definition also has the advantage that now we can allow irrational
weights as well. (But this does not change the problem since we can
approximate any irrational weight by rational weights and then we blow up the
construction with the common denominator of the weights.) The following
result of the second author  \cite{nagy1}  also shows that the problem in this
framework is much more convenient. Note that this result is a simple
generalisation of a statement of Bondy, Shen, Thomass\'e and Thomassen
\cite{bond}.

\begin{theorem} {\rm \cite{nagy1}} \label{ZolTh1} If
  there is a construction of a 
  blow-up graph $G[H]$ not containing $H$ then there is a construction of a
  weighted blow-up graph $G'[H]$ not containing $H$, where
\begin{itemize} 
\item each edge density is at least as large as in $G[H]$,
\item the cluster $V_i$ contains at most as many vertices as the degree of the
  vertex $v_i$ in the graph $H$.
\end{itemize}
\end{theorem}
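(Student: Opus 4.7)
The plan is to convert $G[H]$ into the weighted blow-up graph $G'[H]$ by processing one cluster at a time, using a linear-programming style argument to shrink each cluster while keeping the densities at least as large and the no-$H$-transversal property intact.

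I start by viewing $G[H]$ itself as a weighted blow-up graph with every vertex of $A_i$ carrying the uniform weight $1/|A_i|$; the densities $d(A_i,A_j)$ and the absence of an $H$-transversal are preserved automatically. Then I reduce the clusters one by one.

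Fix a cluster $A_i$ with neighbors $A_{j_1},\dots,A_{j_{D_i}}$ in $H$ and attach to each $u\in A_i$ the profile vector $p_u\in\mathbb{R}^{D_i}$ whose $k$-th coordinate is $|N(u)\cap A_{j_k}|/|A_{j_k}|$. For any weight vector $w\ge 0$ on $A_i$ with $\sum_u w(u)=1$, the new density of the pair $(A_i,A_{j_k})$ is the linear functional $\sum_u w(u)p_u^{(k)}$. Consequently, the weightings that do not decrease any of the $D_i$ incident densities form a polytope cut out by $D_i$ half-space inequalities inside the simplex; this polytope is nonempty since the current uniform weighting is feasible. An extreme-point / Carath\'eodory argument selects a feasible weighting $w^\ast$ whose support has size at most $D_i$, and deleting the zero-weight vertices produces the reduced cluster. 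Densities not involving $A_i$ are untouched, and because the reduced cluster is a subset of $A_i$, any $H$-transversal in the reduced graph would already have been an $H$-transversal in $G[H]$, contradicting the hypothesis. Iterating over all $n$ clusters yields $G'[H]$.

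The main obstacle is the tight support bound $|V_i|\le D_i$ rather than the $D_i+1$ delivered by a naive basic-feasible-solution count. I plan to gain the missing unit by exploiting the slack in the density inequalities: before extracting the extreme point, I perturb the uniform weighting in a direction that strictly increases at least one incident density, so at least one density inequality becomes strict at the chosen vertex and is replaced by an extra tight non-negativity constraint, yielding one fewer positive coordinate. This refined counting is the precise point where the claim genuinely generalises the triangle statement of Bondy, Shen, Thomass\'e and Thomassen to an arbitrary $H$.
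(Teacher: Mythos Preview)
The paper does not prove this theorem; it quotes it from \cite{nagy1} and only remarks that it generalises the triangle case of Bondy, Shen, Thomass\'e and Thomassen. Your cluster-by-cluster linear-programming reduction is the natural argument and is essentially sound; the preservation of the no-transversal property under passing to a weighted sub-cluster, and the inductive bookkeeping, are fine (with the obvious adjustment that after the first reduction the profile coordinate $p_u^{(k)}$ must be computed as $\sum_{v\in A_{j_k}:\,(u,v)\in E}w(v)$ rather than $|N(u)\cap A_{j_k}|/|A_{j_k}|$).

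The one genuine gap is in the step where you sharpen $D_i+1$ to $D_i$. The sentence ``I perturb the uniform weighting in a direction that strictly increases at least one incident density, so at least one density inequality becomes strict at the chosen vertex'' is a non-sequitur: moving first to a point where one inequality is slack does not force the extreme point you subsequently reach to have any slack density constraint --- the simplex-type walk may well terminate at a vertex where all $D_i$ density constraints are tight again. Two separate things need to be said. First, if $\dim\bigl(\operatorname{conv}\{p_u\}\bigr)<D_i$ then no admissible perturbation need exist at all (any direction that raises one density may push another below its bound), but in that case the $D_i$ density functionals are affinely dependent modulo the equation $\sum_u w(u)=1$, so the basic-feasible-solution count already yields support at most $D_i$ without any perturbation. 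Second, if $\dim\bigl(\operatorname{conv}\{p_u\}\bigr)=D_i$ then the current density vector $q$, being the centroid of the profiles, lies in the interior of their convex hull; hence some feasible weighting achieves a strictly larger value of, say, $\sum_k \text{density}_k$. Take a vertex of the feasible polytope \emph{maximising} this linear functional: at that vertex at least one density strictly exceeds $q_k$, so at most $D_i-1$ density constraints are tight, and the count gives support at most $D_i$. With this two-case refinement your plan becomes a complete proof.
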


The importance of this theorem lies in the fact that  if we are looking for
the critical edge density we only have to check 
those constructions where each cluster contains a bounded number of
vertices. So in fact, we have to check a finite number of configurations and
we only have to decide that which configuration has a weighting providing the
greatest density. In general, the number of possible configurations is very
large, still it has some notable consequences. For instance, there is a ``best''
construction in the sense that if we have construction for
${\gamma_e-\varepsilon}$ for every $\varepsilon$ then we have a construction
with edge densities $\gamma_e$. Indeed, we have a compact space (finite number
of configurations) and the edge densities are continuous functions of the
weights.

With a small  extra idea one can prove the following  important corollary of
this theorem. 

\begin{theorem} {\rm \cite{nagy1}} There is a weighted
  blow-up graph $G[H]$ not containing $H$  where each edge density is exactly
  the critical edge density. 
\end{theorem}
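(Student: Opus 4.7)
The plan is to combine Theorem~\ref{ZolTh1} with a compactness argument, and then to introduce a ``splitting and deletion'' trick which serves as the small extra idea needed to equalise the densities.

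First I would appeal to the definition of $d_{crit}(H)$: for every $\varepsilon>0$ there exists a blow-up graph $G_\varepsilon[H]$, not containing $H$ as a transversal, with every edge density at least $d_{crit}(H)-\varepsilon$. Applying Theorem~\ref{ZolTh1}, I may replace $G_\varepsilon[H]$ by a weighted blow-up graph in which each cluster $V_i$ has at most $D_i\le \Delta(H)$ vertices, with edge densities at least as large and still no $H$-transversal. Since each cluster has bounded size, the number of possible underlying unweighted adjacency patterns between clusters is finite, so by passing to a subsequence $\varepsilon_k\to 0$ I may assume this combinatorial structure is fixed. The vertex weights then live in a compact product of simplices, so along a further subsequence they converge to some limit $\underline{w}^{\star}$. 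The resulting weighted blow-up $G^{\star}[H]$ has the same underlying graph (so it still contains no $H$-transversal: existence of a transversal is a purely combinatorial property depending on the adjacency pattern, not on the weights — even when some of the limiting weights vanish, the set of admissible choices of one vertex per cluster only shrinks), and every density, being a continuous bilinear function of the weights, satisfies $d_{ij}\ge d_{crit}(H)$.

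The extra step is to lower any density $d_{ij}>d_{crit}(H)$ to exactly $d_{crit}(H)$ without introducing a transversal. For a fixed edge $(i,j)$ with surplus $\delta:=d_{ij}-d_{crit}(H)>0$, pick an edge $uv$ of $G^{\star}[H]$ with $u\in A_i$, $v\in A_j$, $w(u)w(v)>0$. Split $u$ into two copies $u_1,u_2$ of weights $\alpha w(u)$ and $(1-\alpha)w(u)$, each carrying exactly the neighborhood of $u$. Because the two copies have identical neighborhoods, any would-be transversal using $u_1$ or $u_2$ translates to one using $u$ in $G^{\star}[H]$, so $H$-freeness is preserved; all densities are also unchanged. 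Now delete the single edge $u_1v$: this decreases $d_{ij}$ by exactly $\alpha w(u)w(v)$ and cannot introduce a transversal (edge deletion never does). Choosing $\alpha=\min\{1,\delta/(w(u)w(v))\}$ either drops $d_{ij}$ to $d_{crit}(H)$ on the spot or removes the edge $uv$ entirely and reduces the remaining surplus; iterating over edges between $A_i$ and $A_j$ terminates after finitely many steps since the surplus is at most $d_{ij}\le 1$. Performing this on every edge of $H$ produces the required construction.

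The main obstacle I anticipate is the limit step: one must verify that $H$-freeness survives compactification even when some limiting weights are zero, since in principle the effective graph on positive-weight vertices may look quite different from the graphs in the sequence. The observation that resolves this is that transversal existence is a property of the unweighted structure, which is fixed along the chosen subsequence, and that discarding vertices (as is effectively done by zero weights) can only remove transversals. Once this is in hand, the rest is essentially bookkeeping, and the splitting trick ensures that the real-valued target $d_{crit}(H)$ can actually be attained exactly on every edge rather than only approached.
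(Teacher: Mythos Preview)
Your argument is correct and follows exactly the route the paper indicates: the compactness step (bounded cluster sizes via Theorem~\ref{ZolTh1}, finitely many underlying configurations, continuity of densities in the weights) matches the paper's sketch verbatim, and your vertex-splitting plus edge-deletion trick is a clean realisation of the unspecified ``small extra idea'' needed to bring each density from $\ge d_{crit}(H)$ down to exactly $d_{crit}(H)$. Your handling of the zero-weight limit is also fine, since transversal existence depends only on the fixed underlying adjacency pattern.
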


From this theorem one can deduce the following ones.

\begin{prop} {\rm \cite{nagy1}}\label{sub} If $H_1$ is a subgraph of
  $H_2$ then for the critical edge densities we have
$$d_{crit}(H_1)\leq d_{crit}(H_2).$$
If $H_2$ is connected and  $H_1$ is a proper subgraph of $H_2$ then the
inequality is strict.
\end{prop}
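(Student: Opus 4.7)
The proposition has two parts; I plan to dispatch them separately.

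For the non-strict inequality, my plan is a direct embedding. Given any blow-up $G[H_1]$ avoiding $H_1$ with densities $d_e$ on $e\in E(H_1)$, I build a blow-up $G[H_2]$ avoiding $H_2$ by keeping the clusters $A_i$ for $i\in V(H_1)$, attaching a single-vertex cluster for each $i\in V(H_2)\setminus V(H_1)$, retaining the existing bipartite graphs on $E(H_1)$, and placing complete bipartite graphs (density $1$) on every edge of $E(H_2)\setminus E(H_1)$. An $H_2$-transversal $\varphi$ of $G[H_2]$ would restrict via $\varphi|_{V(H_1)}$ to an $H_1$-transversal of $G[H_1]$, which is impossible by the choice of $G[H_1]$. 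Letting the densities in $G[H_1]$ approach $d_{crit}(H_1)$ yields $d_{crit}(H_2)\ge d_{crit}(H_1)$.

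For the strict inequality, I would first reduce to two elementary cases. Choose a chain $H_1=F_0\subsetneq F_1\subsetneq\cdots\subsetneq F_k=H_2$ of subgraphs of $H_2$ so that each $F_{j+1}$ arises from $F_j$ by adding either a single edge $e_0$ (both endpoints in $V(F_j)$) or a single pendant vertex $v_0$ together with its edge $(v_0,v_1)$; by the non-strict inequality it suffices to produce a strict increment on one link. Suppose, for contradiction, that $d_{crit}(F_j)=d_{crit}(F_{j+1})=d$. Applying the extremal construction and cluster-size results of \cite{nagy1} to $F_{j+1}$, I fix a weighted blow-up $G[F_{j+1}]$ in which every density equals $d$, $|A_w|\le\deg_{F_{j+1}}(w)$, and no $F_{j+1}$-transversal exists. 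Restricting to $F_j$ yields a weighted blow-up $G^{*}[F_j]$ with all densities equal to $d$, and the absence of $F_{j+1}$-transversals forces every $F_j$-transversal of $G^{*}[F_j]$ to map $v_1$ outside the set $N\subseteq A_{v_1}$ of atoms which are extendable through $e_0$ to a full $F_{j+1}$-transversal.

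From here I would delete $N$ from $A_{v_1}$ and renormalise its remaining weight to $1$. Every $F_j$-transversal survives, so the modified blow-up still contains no $F_j$-transversal. The density on each edge of $F_j$ not incident to $v_1$ remains $d$, while on each edge $(v_1,w)\in E(F_j)$ it becomes $(d-\alpha(w))/(1-\nu(N))$, where $\nu(N)$ is the weight of $N$ and $\alpha(w)$ the weighted contribution of $N$-edges to $A_w$. The tightness of $G[F_{j+1}]$ on $e_0$ determines $\nu(N)$ (for instance $\nu(N)=d$ in the pendant case with $|A_{v_0}|=1$), and the extremality of the construction should force $\alpha(w)<d\cdot\nu(N)$, so each affected density strictly exceeds $d$; a small upward perturbation of the remaining edges then produces a blow-up of $F_j$ with all densities $>d$ and no $F_j$-transversal, contradicting $d_{crit}(F_j)=d$.

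The main obstacle is exactly the strict comparison $\alpha(w)<d\cdot\nu(N)$: the trivial upper bound is only $\alpha(w)\le d$, and one must exploit the non-existence of $F_{j+1}$-transversals to rule out the extremal case where $N$ contributes essentially the entire density on $(v_1,w)$. A fallback is to invoke the cluster-size bound $|A_w|\le\deg_{F_{j+1}}(w)$ and reduce the critical inequality to a finite case analysis on the smallest possible link $F_j\subsetneq F_{j+1}$ of the chain.
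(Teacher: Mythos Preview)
The paper does not actually prove this proposition; it is quoted from \cite{nagy1}, with the remark that it ``can be deduced'' from the existence of an exact extremal weighted blow-up (Theorem~2.3). So there is no in-paper argument to compare yours against, and your proof must stand on its own.

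Your argument for the non-strict inequality is correct and is the natural one: extend an $H_1$-avoiding blow-up to an $H_2$-avoiding blow-up by attaching singleton clusters and complete bipartite pieces on the new edges, and observe that an $H_2$-transversal of the extension would restrict to an $H_1$-transversal.

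Your argument for the strict inequality, however, has a genuine gap --- in fact two. First, the deletion goes the wrong way. You correctly observe that every $F_j$-transversal of $G^{*}[F_j]$ sends $v_1$ into $A_{v_1}\setminus N$; but then deleting $N$ \emph{preserves} all of those transversals rather than killing them, so the clause ``the modified blow-up still contains no $F_j$-transversal'' is false as written. To eliminate all $F_j$-transversals you must retain $N$ and delete $A_{v_1}\setminus N$; the density on an edge $(v_1,w)$ then becomes $\alpha(w)/\nu(N)$, and the needed inequality flips to $\alpha(w)>d\cdot\nu(N)$.

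Second, even after this fix the approach does not close. The two candidate densities $\alpha(w)/\nu(N)$ and $(d-\alpha(w))/(1-\nu(N))$ average (with weights $\nu(N)$ and $1-\nu(N)$) to exactly $d$, so whenever one exceeds $d$ the other falls below it; nothing about the extremality of $G[F_{j+1}]$ forces the favourable direction on every incident edge simultaneously. More seriously, even if you did obtain strictly larger densities on all edges through $v_1$, the remaining edges of $F_j$ still sit at exactly $d$, so you have not produced a blow-up of $F_j$ with \emph{all} densities strictly above $d_{crit}(F_j)$, and no contradiction results. The proposed fallback to a ``finite case analysis'' does not rescue this either: the cluster-size bound leaves finitely many combinatorial types but a continuum of weightings, and the obstruction above is present regardless of type. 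A complete proof of the strict part seems to require a genuinely different mechanism than restrict-and-delete.
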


A general lower and upper bound was also proved in \cite{nagy1}. The lower
bound is the consequence of Proposition \ref{sub} and the fact that
$d_{crit}(S_n)=1-\frac{1}{n-1}$.  

\begin{prop} \label{becsles}
$(1-\frac{1}{\Delta(H)})\leq d_{crit}(H)\leq (1-\frac{1}{\Delta^2(H)})$.
\end{prop}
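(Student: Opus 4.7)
The lower bound is immediate from Proposition~\ref{sub}. Fix any vertex $v^*\in V(H)$ of degree $\Delta(H)$; the star $S_{\Delta(H)+1}$ consisting of $v^*$ and its neighbors is a subgraph of $H$, hence
$$d_{crit}(H)\geq d_{crit}(S_{\Delta(H)+1})=1-\frac{1}{\Delta(H)},$$
where the critical density of the star is either taken as known (see \cite{nagy1}) or follows from the tree formula with $\lambda_{\max}(S_n)=\sqrt{n-1}$.

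For the upper bound I want to show that if every density in the blow-up strictly exceeds $1-1/\Delta^2(H)$, then $H$ appears as a transversal. By Theorem~\ref{ZolTh1} I reduce to the weighted setting in which each cluster $A_i$ has total weight $1$ and $|A_i|\leq D_i\leq\Delta(H)$. The averaging step underlying the argument is the following: if $\beta_j(u)$ denotes the weight of non-neighbors of $u\in A_i$ inside $A_j$, then
$$\sum_{u\in A_i}w(u)\beta_j(u)\;=\;1-d(A_i,A_j)\;<\;\frac{1}{\Delta^2(H)},$$
so by Markov's inequality the total weight of vertices $u\in A_i$ with $\beta_j(u)\geq 1/\Delta(H)$ is less than $1/\Delta(H)$. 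Union bounding over the at most $\Delta(H)$ neighbors of $v_i$, inside each cluster $A_i$ there exists a \emph{good} vertex $u_i^*$ satisfying $\beta_j(u_i^*)<1/\Delta(H)$ for every neighbor $v_j$ of $v_i$.

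To turn this one-cluster statement into a transversal I would argue sequentially. Pick a BFS order of the vertices of $H$ starting from a vertex of maximum degree, and at each step $i$ let $P_i\subseteq A_i$ be the intersection of $A_i$ with the neighborhoods in $G[H]$ of the previously chosen representatives $u_j^*$ with $j<i$ and $v_j$ adjacent to $v_i$; the goodness property of the earlier choices keeps $|P_i|$ strictly positive, and the averaging step applied inside $P_i$ then furnishes a representative $u_i^*\in P_i$ that is good with respect to the not-yet-processed neighbors of $v_i$.

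The main obstacle is that the Markov/union bound is only just tight when $v_i$ has degree exactly $\Delta(H)$: the total weight of ``bad'' vertices is at most $\Delta(H)\cdot 1/\Delta(H)=1$. Hence both the density hypothesis and the conclusion have to be handled via strict inequalities, and the processing order must be chosen so that maximum-degree vertices either come first (so their neighbors' pools have not yet been reduced) or are processed at a moment when sufficient slack remains. Tracking this slack carefully, together with the bound $|A_i|\leq\Delta(H)$ supplied by the reduction to the weighted setting, is what forces the desired transversal into existence and hence yields $d_{crit}(H)\leq 1-1/\Delta^2(H)$.
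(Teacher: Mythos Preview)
Your lower bound is exactly what the paper sketches: monotonicity under subgraphs (Proposition~\ref{sub}) together with $d_{crit}(S_{\Delta+1})=1-1/\Delta$. For the upper bound the paper gives no argument here and simply cites \cite{nagy1}, so there is nothing to compare against; let me instead comment on your attempt on its own merits.

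Your greedy scheme is correct in spirit, but the last paragraph hedges where no hedging is needed, and it invokes ingredients (BFS from a max-degree vertex, the bound $|A_i|\le\Delta$ from Theorem~\ref{ZolTh1}) that play no role. The clean version is this. Fix \emph{any} ordering $v_1,\dots,v_n$. Suppose inductively that $u_1,\dots,u_{i-1}$ have been chosen so that (i) they form a partial transversal and (ii) each $u_j$ is ``forward-good'', meaning $\beta_k(u_j)<1/\Delta$ for every neighbour $v_k$ of $v_j$ with $k\ge i$. Let $k$ be the number of already-processed neighbours of $v_i$ and $\ell$ the number of not-yet-processed ones, so $k+\ell=\deg(v_i)\le\Delta$. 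By (ii), the set $P_i\subseteq A_i$ of common $G[H]$-neighbours of the $k$ earlier choices has weight strictly greater than $1-k/\Delta$. Your Markov step, applied in $A_i$ (not in $P_i$), shows that the set $B_i\subseteq A_i$ of vertices that fail forward-goodness has weight strictly less than $\ell/\Delta$. Hence
\[
w(P_i\setminus B_i)\ \ge\ w(P_i)-w(B_i)\ >\ \Bigl(1-\tfrac{k}{\Delta}\Bigr)-\tfrac{\ell}{\Delta}\ =\ 1-\tfrac{\deg(v_i)}{\Delta}\ \ge\ 0,
\]
so one can pick $u_i\in P_i\setminus B_i$ and continue. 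The strict density hypothesis is exactly what makes both displayed inequalities strict, and that alone suffices; no special ordering and no appeal to the cluster-size reduction is required. With this bookkeeping in place your argument yields $d_{crit}(H)\le 1-1/\Delta(H)^2$.

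The only genuine slip in your write-up is the phrase ``the averaging step applied inside $P_i$'': Markov must be run in the full cluster $A_i$, since the density hypothesis is stated there; the point is that the bad set $B_i$ so obtained is globally small, hence small after intersecting with $P_i$.
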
 

The upper bound will be strengthened in Section 4. It was known that
$$d_{crit}(H)<1-\frac{1}{4(\Delta(H)-1)}$$ also holds for trees. It turned out
that it is a general upper bound. 
\medskip

At last, let us mention a theorem of Adrian Bondy, Jian Shen, St\'ephan
Thomass\'e and Carsten Thomassen. On the one hand, it solves the inhomogeneous
problem for $H=K_3$. On the other hand, it provides a base in some forthcoming
proofs.

\begin{lemma} {\rm \cite{bond}}\label{triangle1} 
 Let $\alpha, \beta,\gamma$ be the edge densities between the clusters of a
 blow-up graph of the triangle. 
If 
$$\alpha \beta+\gamma>1,\ \beta\gamma+\alpha>1,\ \gamma\alpha+\beta>1,$$
then the blow-up graph contains a triangle as a transversal. Otherwise there
exists a weighted blow-up graph with the prescribed edge densities without
containing a triangle. 
\end{lemma}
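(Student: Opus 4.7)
Plan. The statement has two halves. For the ``otherwise'' direction (construction when some inequality fails), assume without loss of generality that $\alpha\beta+\gamma\le 1$ and build the following weighted blow-up. Take $A_3=\{v_3\}$ a single vertex; let $A_1$ consist of two vertices of weights $\beta$ and $1-\beta$, and $A_2$ of two vertices of weights $\alpha$ and $1-\alpha$. Place an edge from $v_3$ to the weight-$\beta$ vertex of $A_1$ and to the weight-$\alpha$ vertex of $A_2$; this already gives $d(A_1,A_3)=\beta$ and $d(A_2,A_3)=\alpha$. Between $A_1$ and $A_2$, forbid only the edge between the two ``big'' vertices, whose weighted contribution would be $\alpha\beta$; this is the only edge that could complete a triangle through $v_3$. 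The remaining three possible edges in $A_1\times A_2$ have total weighted contribution $1-\alpha\beta\ge\gamma$, so by refining the clusters if necessary one can realise density exactly $\gamma$ between $A_1$ and $A_2$ while avoiding any triangle transversal.

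For the forward direction I would prove the contrapositive: if the weighted blow-up has no triangle, at least one of $\alpha\beta+\gamma,\ \beta\gamma+\alpha,\ \alpha\gamma+\beta$ is $\le 1$. By Theorem~\ref{ZolTh1} reduce to the case $|A_i|\le \deg_{K_3}(v_i)=2$ for every $i$. The core local observation is that if no triangle passes through $v_3\in A_3$, then no edge of $G_{12}$ lies between $N(v_3)\cap A_1$ and $N(v_3)\cap A_2$, so the set of non-edges in $G_{12}$ has weighted measure at least $\alpha_{v_3}\beta_{v_3}$ where $\alpha_{v_3}:=d(v_3,A_2)$ and $\beta_{v_3}:=d(v_3,A_1)$. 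This yields
\[
\alpha_{v_3}\beta_{v_3}+\gamma \;\le\; 1,
\]
and, by symmetry, $\beta_{v_1}\gamma_{v_1}+\alpha\le 1$ for every $v_1\in A_1$ and $\alpha_{v_2}\gamma_{v_2}+\beta\le 1$ for every $v_2\in A_2$.

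The main obstacle is to lift one of these per-vertex inequalities to its global counterpart. If some $A_i$ consists of a single vertex, the local inequality at that vertex already coincides with a global one and we are done; so assume $|A_1|=|A_2|=|A_3|=2$. In this $2\times 2\times 2$ regime $\alpha,\beta,\gamma$ become explicit bilinear expressions in the three weight parameters and the twelve binary edge indicators between clusters, and the no-triangle condition is a boolean cover on the eight vertex-triples. In each cluster $A_i$ let $v_i^*$ be the vertex maximising the product of its two incident edge-density functions. If for some $i$ we have, say, $\alpha_{v_3^*}\beta_{v_3^*}\ge\alpha\beta$, the local inequality at $v_3^*$ is exactly the desired global one. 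The remaining case is that in every cluster the two incident edge-density functions are strictly negatively correlated across the two vertices, and a short structural argument in the $2\times 2\times 2$ setting shows that such a simultaneous ``specialisation'' of the two vertices in each cluster is incompatible with the no-triangle cover constraint, giving a contradiction. This combining/correlation step is the genuine difficulty; the construction of the first paragraph then shows that each of the three global bounds is tight, so the inequality system in the statement is the sharp one.
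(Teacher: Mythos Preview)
The paper does not prove this lemma: it is quoted from Bondy, Shen, Thomass\'e and Thomassen \cite{bond} and used as a black box throughout. So there is no proof in the paper to compare against, and I can only assess your argument on its own merits.

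Your construction for the ``otherwise'' direction is correct and is essentially the star-decomposition construction for $K_3$ rooted at the vertex opposite the edge of density $\gamma$; the refinement needed to hit density exactly $\gamma$ rather than $1-\alpha\beta\ge\gamma$ is routine.

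The forward direction, however, has a genuine gap. Your per-vertex inequality $\alpha_{v_3}\beta_{v_3}+\gamma\le 1$ is correct and is the right starting point, and the reduction via Theorem~\ref{ZolTh1} to $|A_i|\le 2$ is legitimate (the densities in the reduced blow-up are at least the original ones, which is the correct direction for the contrapositive). But the passage from the three families of local inequalities to a single global one in the $2\times2\times2$ case is precisely the content of the lemma, and you do not actually carry it out. Your proposed mechanism --- pick in each cluster the vertex maximising the product of its two incident densities, and if none of these three local products dominates the corresponding global product, derive a structural contradiction from ``simultaneous negative correlation being incompatible with the no-triangle cover'' --- is plausible-sounding but is neither proved nor obviously true. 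Concretely: with $A_3=\{u,u'\}$ of equal weights and $\alpha_u=1,\ \beta_u=0,\ \alpha_{u'}=0,\ \beta_{u'}=1$, one has $\alpha_u\beta_u=\alpha_{u'}\beta_{u'}=0<\tfrac14=\alpha\beta$, so the local bound at $A_3$ alone gives nothing; one genuinely has to combine constraints across all three clusters together with the boolean no-triangle condition, and you have not shown how. Since you yourself flag this step as ``the genuine difficulty'', the proposal as written is a plan rather than a proof.
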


\section{Inhomogeneous case: trees }


In this section we study the case when the graph $H$ is a tree.

\begin{theorem} \label{TT1} Let $T$ be a tree, $v_n$ is an leaf of
  $T$. Assume that for each edge of $T$ a density $\gamma_e=1-r_e$ is 
  given. Let $T'$ be a tree obtained from $T$ by deleting the leaf $v_n$
  (together with the edge $e_{n-1,n}=v_{n-1}v_n$). Let the densities
  $\gamma'_e$'s be defined as follows: 
$$\gamma'_e=\left\{ {\gamma_e=1-r_e \mbox{ if $e$ is not incident to $v_{n-1}$,}
    \atop     1-\frac{r_e}{1-r_{e_{n-1,n}}} \mbox{ if $e$ is incident
      to $v_{n-1}$.} }\right.$$
Then the set of densities $\gamma_e$ ensure the existence of the factor $T$ if
and only if all $\gamma'_e$'s are between $0$ and $1$ and the set of densities
$\gamma'_e$ ensure the existence of the factor $T'$.
\end{theorem}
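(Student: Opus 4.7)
The plan is to prove both directions by explicit constructions on weighted blow-up graphs. Write $r_e=1-\gamma_e$ and $r'_e=1-\gamma'_e$, so that for $e\in E(T')$ incident to $v_{n-1}$ the stated rule reads $r'_e=r_e/(1-r_{e_{n-1,n}})$. By Theorem~\ref{ZolTh1} we may restrict to weighted blow-ups with bounded cluster size; since deleting edges preserves ``contains no $H$-transversal'' while only lowering densities, it suffices to produce blow-ups whose densities are at least the prescribed values. A useful preliminary identity, obtained from the leaf recursion $F(T;x,1)=F(T-v_n;x,1)-x_{e_{n-1,n}}F(T-v_{n-1}-v_n;x,1)$ applied twice, is
\[
F(T;\underline{r},1)=(1-r_{e_{n-1,n}})\,F(T';\underline{r}',1);
\]
this already shows that the transform is the correct one at the level of the multivariate matching polynomial and motivates the two combinatorial constructions below.

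\emph{Direction ``the $\gamma_e$ do not ensure $T\Longrightarrow$ the RHS of the theorem fails''.} Take a $T$-free weighted blow-up $G[T]$ with densities $\gamma_e$ and build a blow-up $G'[T']$ by keeping $B_i=A_i$ for $i\neq n-1,n$ and setting
\[
B_{n-1}=\{(u,v):u\in A_{n-1},\,v\in A_n,\,uv\in E(G[T])\},\qquad w(u,v)=\frac{w(u)w(v)}{1-r_{e_{n-1,n}}},
\]
joining $(u,v)\in B_{n-1}$ to $y\in A_j$ exactly when $uy\in E(G[T])$, for every $v_j$ adjacent to $v_{n-1}$ in $T'$. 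The map $(u_1,\dots,u_{n-2},(u_{n-1},v))\mapsto(u_1,\dots,u_{n-2},u_{n-1},v)$ is a bijection between $T'$-transversals of $G'[T']$ and $T$-transversals of $G[T]$, so $G'[T']$ is $T'$-free. For $e=v_{n-1}v_j$, writing $p(u),p_j(u)$ for the non-neighbor weights of $u$ in $A_n,A_j$, a direct expansion gives
\[
d(B_{n-1},A_j)=\frac{1}{1-r_{e_{n-1,n}}}\sum_{u\in A_{n-1}}w(u)(1-p(u))(1-p_j(u))\geq 1-\frac{r_e}{1-r_{e_{n-1,n}}}=\gamma'_e,
\]
where the inequality uses $\sum_u w(u)p(u)p_j(u)\geq 0$. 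If all $\gamma'_e\in[0,1]$, deleting edges brings the densities down to exactly $\gamma'_e$, proving that the $\gamma'_e$ do not ensure $T'$; otherwise the ``$\gamma'_e\in[0,1]$'' clause of the RHS already fails.

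\emph{Converse.} Assume the RHS fails. If $\gamma'_e<0$ for some $e=v_{n-1}v_j$ then $r_e+r_{e_{n-1,n}}>1$, and I would realize a $P_3$-free weighted blow-up on $v_jv_{n-1}v_n$ with densities $\gamma_e,\gamma_{e_{n-1,n}}$ by splitting $A_{n-1}$ into three parts according to which of $A_j,A_n$ it is disconnected from (the linear system for the part weights is solvable precisely because $r_e+r_{e_{n-1,n}}>1$), then extending arbitrarily to the remaining clusters of $T$ with the correct prescribed densities. Otherwise all $\gamma'_e\in[0,1]$ but the $\gamma'_e$ do not ensure $T'$, so there is a $T'$-free weighted blow-up $G[T']$ with densities $\gamma'_e$. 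Enlarge it to a blow-up $G[T]$ by adjoining $A_n=\{z\}$ with $w(z)=1$ and splitting each $u\in A_{n-1}$ into copies $u^0,u^1$ of weights $r_{e_{n-1,n}}w(u),(1-r_{e_{n-1,n}})w(u)$; declare $u^1\sim z$ and $u^0\not\sim z$, let $u^1$ inherit the edges of $u$ in $G[T']$, and join $u^0$ to every vertex of every $A_j$ with $v_j$ a neighbor of $v_{n-1}$ in $T'$. A direct count yields $d(A_{n-1},\{z\})=1-r_{e_{n-1,n}}=\gamma_{e_{n-1,n}}$ and
\[
d(A_{n-1},A_j)=r_{e_{n-1,n}}+(1-r_{e_{n-1,n}})(1-r'_e)=1-r_e=\gamma_e.
\]
Any $T$-transversal of this new $G[T]$ would be forced to use a $u^1$-vertex at $v_{n-1}$ (to realize $e_{n-1,n}$) and would thus project to a $T'$-transversal of $G[T']$, contradicting $T'$-freeness; hence $G[T]$ is $T$-free.

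The main technical obstacle is verifying the two density computations: in the forward direction, that the product weighting $w(u)w(v)/(1-r_{e_{n-1,n}})$ on pairs produces density $\geq\gamma'_e$ (which hinges on the trivial positivity $\sum_u w(u)p(u)p_j(u)\geq 0$), and in the converse, that the split ratio $r_{e_{n-1,n}}:(1-r_{e_{n-1,n}})$ is exactly the one that recovers $\gamma_e$ from $\gamma'_e$. The $T$- versus $T'$-transversal correspondences, by contrast, are routine once the constructions are in place.
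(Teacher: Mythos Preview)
Your argument is correct and follows the same two-direction blueprint as the paper, with one construction done differently.

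For the implication ``$\gamma_e$ ensure $T\Leftarrow\gamma'_e$ ensure $T'$'' the paper works directly rather than by contrapositive: given any $G[T]$ with densities $\geq\gamma_e$, it restricts $A_{n-1}$ to the set $R=\{u\in A_{n-1}:u\text{ has a neighbour in }A_n\}$, notes $w(R)\geq\gamma_{e_{n-1,n}}$, and bounds the densities from $R$ to each $A_k$ by a crude edge count. You instead replace $A_{n-1}$ by the set of \emph{edges} into $A_n$, weighted by $w(u)w(v)/\gamma_{e_{n-1,n}}$. Your version is slightly longer but buys an exact bijection between $T$- and $T'$-transversals and a clean bilinear density computation; the paper's is shorter but the density bound is coarser. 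For the converse your Case~2 is identical to the paper's construction (all your $u^0$'s have the same neighbourhood and merge into the single vertex $v_{n-1}^*$ the paper adds), and your Case~1 makes explicit the $P_3$-free blow-up the paper only asserts exists. The matching-polynomial identity you state up front is proved in the paper only afterwards (Lemma~\ref{TL2}); it is good motivation but your argument does not actually use it.

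One small wrinkle: in your forward construction you normalize by the \emph{prescribed} $1-r_{e_{n-1,n}}$, so total weight of $B_{n-1}$ is $1$ only if the actual density equals $\gamma_{e_{n-1,n}}$. Either justify reducing to exact densities (deleting edges in a weighted blow-up moves densities in discrete jumps, so a word is needed), or---simpler---normalize by the actual density $d(A_{n-1},A_n)\geq\gamma_{e_{n-1,n}}$ and observe that the resulting bound $1-(1-d_e)/d_{e_{n-1,n}}\geq\gamma'_e$ only improves.
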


\begin{remark} Clearly, this theorem provides us with an efficient algorithm to
  decide whether a given set of densities ensures the existence of a factor (see
  Algorithm~\ref{tree-alg}).
\end{remark}

\begin{proof} First we prove that if all the $\gamma'_e$'s are indeed
  densities and they ensure the existence of the factor  $T'$ then the original
  $\gamma_e$'s ensure the existence of a factor $T$.

Assume that $G[T]$ is a blow-up of $T$ such that the density between $A_i$
and $A_j$ is at least $\gamma_{ij}$, where $A_i$ is the blow-up of the vertex
$v_i$ of $T$. We need to show that it contains a
factor $T$. 

Let us define
$$R=\left\{ v\in A_{n-1}\ |\  \mbox{ $v$ is incident to some edge going between
    $A_{n-1}$ and $A_n$} \right\} .$$
First of all we show that the cardinality of $R$ is large:
$$|R||A_n|\geq e(R,A_n)=\gamma_{n-1,n}|A_{n-1}||A_n|.$$
Thus $|R|\geq \gamma_{n-1,n}|A_{n-1}|$. 

Next we show that many edges are
incident to $R$. Let $v_k$ be adjacent to $v_{n-1}$.  Then we can bound the
number of edges between $R$ and $A_{k}$ as follows:
$$e(R,A_k)\geq e(A_{n-1},A_k)-(|A_{n-1}|-|R|)|A_k|=
|R||A_k|+(\gamma_{k,n-1}-1)|A_k||A_{n-1}|\geq $$
$$\geq |R||A_k|+(\gamma_{k,n-1}-1)\frac{1}{\gamma_{n-1,n}}|R||A_k|=$$
$$=(1-\frac{r_{k-1,n}}{1-r_{n-1,n}})|R||A_k|=\gamma'_{k,n-1}|R||A_k|.$$
Now delete the vertex set $A_n$ and $A_{n-1}\backslash R$ from $G[T]$. Then the
obtained graph is a blow-up of $T'$ with edge densities ensuring the factor
$T'$. But this factor can be extended to a factor $T$ because of the
definition of $R$.

Now we prove that if some $\gamma'_{k,n-1}<0$, then there exists a construction
for a blow-up of $T$ having no factor of $T$.  In fact $\gamma'_{k,n-1}<0$
means that $\gamma_{k,n}+\gamma_{n-1,n}<1$ and so we can conclude that
some construction does not induce the path $u_ku_{n-1}u_n$ where $u_i\in A_i$
 ($i\in \{ k,n-1,n\}$). 

Now assume that all $\gamma'_e$'s are proper densities, but there is a
construction $G'[T']$ with edge-densities at least $\gamma'_e$'s, but which
does not induce a factor $T'$. In this case we can easily construct a blow-up
$G[T]$ of the tree not inducing $T$ by setting $A_{n-1}=R^*\cup A'_{n-1}$ with an
appropriate weight of $R^*=\{v_{n-1}^*\}$ and taking an $A_n=\{v_n\}$ which we
connect to all elements of $A'_{n-1}$, but we do not connect to $v_{n-1}^*$.     
\end{proof}  

\begin{Al} \label{tree-alg} 

\textbf{Step 0.} Given a tree a $T_0$ and edge densities $\gamma^0_e$. Set
$T:=T_0$ and $r_e=1-\gamma^0_e$.\\
\textbf{Step 1.} Consider $(T,\underline{r_e})$. 
\begin{itemize}
\item If $|V(T)|=2$ and $0\leq r_e<1$ then \textbf{STOP}: the 
  densities $\gamma^0_e$ ensure the existence of the transversal $T_0$.
\item If $|V(T)|\geq 2$ and there exists an edge for which $r_e\geq 1$ then 
\textbf{STOP}: the densities  $\gamma^0_e$ do not ensure the existence of the
transversal $T_0$.\\
\end{itemize}
\textbf{Step 2.} If $|V(T)|\geq 3$ and $0\leq r_e<1$ for all edges $e\in E(T)$
then \textbf{do}  pick a vertex $v$ of degree $1$, let $u$ be its unique
  neighbor. Let $T':=T-v$ and 
$$r'_e=\left\{ {r_e \mbox{ if $e$ is not incident to $u$,}
    \atop     \frac{r_e}{1-r_{(u,v)}} \mbox{ if $e$ is incident
      to $u$.} }\right.$$
Jump to Step 1. with $(T,\underline{r_e}):=(T',\underline{r'_e})$.
\end{Al}

In what follows we analyse the above mentioned algorithm. The following
concept will be the key tool.

Let $x_e$'s be variables assigned to each edge of a graph. Recall that we
define the \textit{multivariate matching polynomial}  $F$ as follows:
  $$F(\underline{x_e},t)=\sum_{M\in \mathcal{M}}(\prod_{e\in M}x_e)(-t)^{|M|},$$
where the summation goes over the matchings of the graph including the empty
matching.

The following lemma is a straightforward generalization of the well-known
fact that for trees the matching polynomial and the characteristic polynomial
of the adjacency matrix coincide.
\bigskip

\begin{lemma} \label{char-match}
Let $T$ be a tree on $n$ vertices.
Let us define the following matrix of size $n\times n$. 
The entry $a_{i,j}=0$ if the vertices $v_i$ and $v_j$ are not adjacent and
$a_{i,j}=\sqrt{x_e}$ if $e=v_iv_j\in E(T)$. Let $\phi(\underline{x_e},t)$ be the
characteristic polynomial of this matrix. Then
$$\phi(\underline{x_e},t)=t^nF(\underline{x_e},\frac{1}{t^2})$$
where $F(\underline{x_e},t)$ is the multivariate matching polynomial.
\end{lemma}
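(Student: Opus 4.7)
The plan is to unpack the characteristic polynomial $\phi(\underline{x_e},t)=\det(tI-A)$ via the Leibniz formula as a signed sum over permutations, and to argue that in a tree only very restricted permutations survive, namely those whose non-fixed-point part is an involution matching the edge set.

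First I would write
$$\det(tI-A)=\sum_{\sigma\in S_n}\operatorname{sgn}(\sigma)\prod_{i=1}^n (tI-A)_{i,\sigma(i)},$$
observing that $(tI-A)_{i,\sigma(i)}$ equals $t$ when $\sigma(i)=i$, equals $-\sqrt{x_e}$ when $\sigma(i)=j\ne i$ and $e=v_iv_j\in E(T)$, and vanishes otherwise. Next I would decompose $\sigma$ into cycles. Every non-trivial cycle $(i_1\,i_2\,\ldots\,i_k)$ of length $k\ge 2$ contributes a nonzero factor only if $v_{i_1}v_{i_2},v_{i_2}v_{i_3},\ldots,v_{i_k}v_{i_1}$ are all edges of $T$, i.e.\ $T$ contains a closed walk on those $k$ vertices. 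For $k\ge 3$ this forces a cycle in $T$, which is impossible since $T$ is a tree, so the only non-trivial cycles appearing are transpositions $(i\,j)$ with $v_iv_j\in E(T)$.

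Therefore the surviving permutations are exactly the fixed-point-free involutions supported on the vertex sets of some matching together with the identity on the remaining vertices; such $\sigma$ are in bijection with matchings $M\in\mathcal{M}(T)$, the fixed-point set having size $n-2|M|$. For the $\sigma$ corresponding to $M$, the contribution is
$$\operatorname{sgn}(\sigma)\cdot t^{n-2|M|}\cdot\prod_{e\in M}(-\sqrt{x_e})(-\sqrt{x_e})=(-1)^{|M|}t^{n-2|M|}\prod_{e\in M}x_e,$$
using that each transposition has sign $-1$, so $\operatorname{sgn}(\sigma)=(-1)^{|M|}$, and that the two $-\sqrt{x_e}$ factors at positions $(i,j)$ and $(j,i)$ multiply to $x_e$ (this is exactly the point where introducing the square roots in $A$ is convenient; individual entries carry $\sqrt{x_e}$ but only the paired product $x_e$ ever shows up).

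Summing over matchings gives
$$\phi(\underline{x_e},t)=\sum_{M\in\mathcal{M}}(-1)^{|M|}t^{n-2|M|}\prod_{e\in M}x_e,$$
which is precisely $t^n F(\underline{x_e},1/t^2)$ by the definition of the multivariate matching polynomial. The only potential snag is the bookkeeping of signs and the square-root entries, but there is no real obstacle beyond the standard Leibniz argument, since the tree hypothesis is used in exactly one place: to kill all cycles of length at least three in the permutation decomposition.
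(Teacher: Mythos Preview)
Your proof is correct and follows exactly the same approach as the paper: expand $\det(tI-A)$ via the Leibniz formula, observe that in a tree only permutations whose non-trivial cycles are transpositions along edges survive, and identify the resulting sum with the matching polynomial. The paper compresses this into a single sentence, whereas you have spelled out the sign and square-root bookkeeping in full; there is no substantive difference.
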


\begin{proof} Indeed when we expand the $\det(tI-A)$ we only get non-zero
  terms when the cycle decomposition of the permutation consist of cycles of
  length at most $2$; but these terms correspond to the terms of the matching
  polynomial.  
\end{proof}

\begin{prop} \label{mlr} Let $G$ be a tree and let $t_w(G)$ denote the
  largest real root of the polynomial 
  $M((G,\underline{w});t)$. Let $G_1$ be a subgraph of $G$ then we have 
$$t_w(G_1)\leq t_w(G).$$
\end{prop}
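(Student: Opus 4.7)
The plan is to reduce the statement to a well-known monotonicity property of the spectral radius of non-negative symmetric matrices. The bridge is Lemma~\ref{char-match}, which translates the matching polynomial of a forest into the characteristic polynomial of a weighted adjacency matrix.

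First I would observe that $G_1$, being a subgraph of a tree, is a forest, and that the proof of Lemma~\ref{char-match} goes through verbatim for any forest (its only input is that the cycle structure of any permutation contributing to $\det(tI-A)$ must consist of fixed points and $2$-cycles, which follows from the absence of cycles of length at least $3$). Hence I can identify $M((G,\underline{w});t)$ with the characteristic polynomial $\phi_G(t)$ of the symmetric non-negative matrix $A_G$ with entries $a_{ij}=\sqrt{w_e}$ when $e=v_iv_j\in E(G)$ and $a_{ij}=0$ otherwise; likewise for $G_1$ with the matrix $A_{G_1}$. Since $A_G$ is symmetric with non-negative entries, all its eigenvalues are real, and by the Perron--Frobenius theorem its largest eigenvalue coincides with its spectral radius $\rho(A_G)$. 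Therefore $t_w(G)=\rho(A_G)$ and $t_w(G_1)=\rho(A_{G_1})$.

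Next I would compare the two matrices. Pad $A_{G_1}$ with zero rows and columns indexed by the vertices of $V(G)\setminus V(G_1)$, producing a symmetric non-negative matrix $\widetilde{A}_{G_1}$ of the same size as $A_G$. The spectrum of $\widetilde{A}_{G_1}$ is that of $A_{G_1}$ augmented by zeros, so $\rho(\widetilde{A}_{G_1})=\rho(A_{G_1})$. Because $E(G_1)\subseteq E(G)$ and $G_1$ inherits its edge weights from $G$, we have the entrywise inequality $0\le \widetilde{A}_{G_1}\le A_G$. The standard monotonicity of the spectral radius for non-negative matrices (a one-line consequence of the variational characterisation $\rho(B)=\max_{\|x\|=1,\,x\ge 0}x^\top B x$ valid for symmetric non-negative $B$) then yields $\rho(\widetilde{A}_{G_1})\le \rho(A_G)$, which is exactly $t_w(G_1)\le t_w(G)$.

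The only subtle step is the first one, namely checking that Lemma~\ref{char-match} applies to $G_1$ even though it is stated for trees: it is here that one must note that the argument depends solely on $G_1$ being acyclic, which is automatic. Everything else reduces to classical linear algebra (Perron--Frobenius and spectral radius monotonicity), so I would not expect any genuine obstacle beyond this brief remark.
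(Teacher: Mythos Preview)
Your proposal is correct and is precisely the argument the paper has in mind: its entire proof reads ``This is straightforward after applying Lemma~\ref{char-match}'', and you have unpacked exactly that, including the one point worth flagging (that Lemma~\ref{char-match} holds for forests since only acyclicity is used). The reduction to spectral-radius monotonicity for entrywise-ordered non-negative symmetric matrices is the natural way to finish.
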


\begin{proof}
This is straightforward after applying Lemma \ref{char-match}
\end{proof}

Note that Proposition~\ref{mlr} holds for arbitrary graph $G$, but we do not
use this stronger version. 

\begin{cor} \label{TC1} Let $T$ be a tree and assume that for each edge $e\in
  E(T)$ a 
  weight $w_e>0$ is assigned. Furthermore, let $T'$ be a subtree of $T$ with
  the induced edge weights. Then the polynomial $F_T(\underline{w_e},t)$ has a
  smaller positive root than the polynomial $F_{T'}(\underline{w_e},t)$.
\end{cor}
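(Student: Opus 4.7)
The plan is to deduce this corollary directly from Proposition~\ref{mlr} by translating the statement about positive roots of the multivariate matching polynomial $F$ into one about largest positive roots of the ordinary weighted matching polynomial $M$, via the identity already recorded in Lemma~\ref{char-match}.

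First I would record the substitution identity
\[
t^n F_T(\underline{w_e}, 1/t^2) \;=\; M\bigl((T,\underline{w});t\bigr),
\]
which follows by collecting the terms indexed by matchings: the matching of size $k$ contributes $(\prod w_e)(-1)^k t^{n-2k}$ on both sides. Consequently, the positive real roots of $M((T,\underline w);t)$ and the positive roots of $F_T(\underline{w_e}, s)$ are in bijection through $s = 1/t^2$. Because the map $t \mapsto 1/t^2$ is strictly decreasing on $(0,\infty)$, the \emph{largest} positive real root of $M((T,\underline w);t)$, which is exactly the quantity $t_w(T)$ from Proposition~\ref{mlr}, corresponds to the \emph{smallest} positive root of $F_T(\underline{w_e},s)$, namely $1/t_w(T)^2$. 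The same identification holds verbatim for $T'$ with its induced weights.

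Second, I would apply Proposition~\ref{mlr} to the inclusion $T' \subseteq T$ (a subtree is in particular a subgraph) to obtain $t_w(T') \le t_w(T)$. Taking reciprocals of squares reverses the inequality, giving
\[
\frac{1}{t_w(T)^2} \;\le\; \frac{1}{t_w(T')^2},
\]
which, by the correspondence above, is exactly the statement that the smallest positive root of $F_T(\underline{w_e},t)$ is at most the smallest positive root of $F_{T'}(\underline{w_e},t)$.

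The only point requiring care is checking that both sides of the comparison are actually positive reals, so the reciprocal-square correspondence has content. This is where the hypotheses $w_e > 0$ and the fact that $T'$ is a (nonempty) subtree enter: the matrix $A$ from Lemma~\ref{char-match} is symmetric with nonnegative entries, and as soon as $T'$ contains an edge, $A$ is a nonzero symmetric nonnegative matrix, so its spectral radius $t_w(T')$ is a strictly positive real eigenvalue, and similarly for $t_w(T)$. Thus $F_{T'}$ and $F_T$ each have a well-defined smallest positive root, and the inequality above makes sense and is precisely the assertion of the corollary.
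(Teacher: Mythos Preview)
Your argument is correct and is exactly the deduction the paper intends: the corollary is stated immediately after Proposition~\ref{mlr} with no separate proof, and your write-up simply makes explicit the passage from the largest root of $M((T,\underline{w});t)$ to the smallest positive root of $F_T(\underline{w_e},s)$ via the substitution $s=1/t^2$. One cosmetic point: the identity $t^nF(\underline{x_e},1/t^2)=M((G,\underline{x_e});t)$ is actually recorded in the Notation section rather than in Lemma~\ref{char-match} (which gives $\phi=t^nF(\cdot,1/t^2)$ for trees), but this does not affect the argument.
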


\begin{lemma} \label{TL2} Let $T$ be a weighted tree with $\gamma_e=1-tr_e$
  weights. Assume that after running the Algorithm~\ref{tree-alg} we get the
  two node tree with edge weight $0$. Then $t$ is the root of the
  multivariate matching polynomial $F(\underline{r_e},s)$ of the tree $T$. 
\end{lemma}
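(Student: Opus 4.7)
Proof plan:

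I will prove the lemma by induction on $|V(T)|$, with the entire engine being the identity
$$F_T(\underline{r_e},t) \;=\; (1 - t r_{e_0})\, F_{T'}(\underline{r'_e},t), \qquad (\star)$$
where $v$ is the leaf picked by the first algorithmic step, $u$ is its unique neighbor, $e_0 = uv$, $T' = T-v$, and $r'_e = r_e$ for edges not incident to $u$ while $r'_e = r_e/(1 - tr_{e_0})$ for edges incident to $u$. Granting $(\star)$, the induction runs by itself: by design, the algorithm on $(T,\underline{r_e})$ continues as the algorithm on $(T',\underline{r'_e})$ after one step, so the inductive hypothesis gives $F_{T'}(\underline{r'_e},t)=0$; combined with $tr_{e_0}<1$ (which must hold, otherwise the algorithm would have halted early rather than reaching the two-node tree), this forces $F_T(\underline{r_e},t)=0$.

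The base case $|V(T)|=2$ is a one-liner: the remaining edge $e$ has weight $\gamma_e=0$ by hypothesis, equivalently $tr_e=1$, and $F_T(r_e,s) = 1 - r_e s$ indeed has root $s = 1/r_e = t$.

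To derive $(\star)$ I would apply the standard leaf recursion of the multivariate matching polynomial at $v$ in $T$, giving
$$F_T(\underline{r_e},t) \;=\; F_{T'}(\underline{r_e},t) \;-\; t r_{e_0}\,F_{T-u-v}(\underline{r_e},t),$$
and then the vertex recursion at $u$ in $T'$ \emph{twice}, once with the original weights $\underline{r_e}$ and once with the rescaled weights $\underline{r'_e}$. Because the rescaling is uniform in the factor $1/(1-tr_{e_0})$ across every edge at $u$, the two resulting expressions share the polynomial $F_{T'-u}$ and a common sum $t\sum_{j} r_{e_j}\,F_{T'-u-w_j}$, up to this factor. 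Eliminating the common sum produces
$$F_{T'}(\underline{r_e},t) \;=\; (1 - t r_{e_0})\,F_{T'}(\underline{r'_e},t) \;+\; t r_{e_0}\,F_{T-u-v}(\underline{r_e},t),$$
and plugging this into the leaf recursion cancels the $F_{T-u-v}$ contributions, leaving exactly $(\star)$.

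The main (and essentially only) obstacle is this short algebraic manipulation; the critical feature is that the algorithm rescales every edge at $u$ by the \emph{same} factor, which is precisely what allows the common sum at $u$ to cancel cleanly and makes the prefactor $(1-tr_{e_0})$ come out right. Lemma~\ref{char-match} plays no role in the present argument — I only use $F$ as a polynomial satisfying the standard matching polynomial recurrences — but it is what converts this root statement into a spectral condition for the applications that follow.
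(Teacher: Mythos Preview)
Your proof is correct and follows essentially the same route as the paper's: both argue by induction on $|V(T)|$, and both hinge on the factorization $F_T(\underline{r_e},t)=(1-tr_{e_0})\,F_{T'}(\underline{r'_e},t)$, derived from the vertex decomposition of $F_{T'}$ at $u$ (the paper writes this decomposition explicitly as $Q_0-\sum s\,x_{k,n-1}Q_k$, which is exactly your vertex recursion with $Q_0=F_{T'-u}$ and $Q_k=F_{T'-u-w_k}$). The only difference is packaging: you split the computation into ``leaf recursion $+$ two vertex recursions $+$ elimination,'' whereas the paper writes the two expansions for $F_{T'}$ and $F_T$ directly and compares them; the algebra is the same.
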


\begin{proof} We prove the statement by induction on the number of vertices of
  the tree. If the tree consists of two vertices then $0=1-tr_e$ means exactly
  that $t$ is the root of the multivariate matching polynomial of the tree.

Now assume that the statement is true for trees on at most $n-1$ vertices. Let
$T$ be a tree on $n$ vertices and assume that we execute the algorithm for the
pendant edge $e_{n-1,n}=(v_{n-1},v_n)$ in the first step, where the degree of
the vertex $v_n$ is $1$.  Let $T'=T-v_n$. Now we continue executing the
algorithm obtaining the two node tree with edge weight $0$. By induction we get
that $F_{T'}(\underline{r'_e},t)=0$. 

We can expand $F_{T'}$ 
according to whether a monomial contains $x_{k,n-1}$ ($e_{k,n-1}\in E(T')$) or
not. Each monomial can contain at most one of the variables $x_{k,n-1}$ ($v_k\in
N(v_{n-1})$). Thus 
$$F_{T'}(\underline{x_e},s)=Q_0(\underline{x_e},s)-\sum_{v_k\in
  N(v_{n-1})}sx_{k,n-1}Q_k(\underline{x_e},s),$$
where $Q_0$ consists of those terms which contain no $x_{k,n-1}$ and
$-sx_{k,n-1}Q_k$ consists of those terms which contain $x_{k,n-1}$, i.e.,
these terms correspond to the matchings containing the edge $(v_k,v_{n-1})$.
Observe that
$$F_{T}(\underline{x_e},s)=(1-sx_{n-1,n})Q_0(\underline{x_e},s)-\sum_{v_k\in
  N(v_{n-1})}sx_{k,n-1}Q_k(\underline{x_e},s)$$
by the same argument.

Since 
$$0=F_{T'}(\underline{r'_e},t)=Q_0(
\underline{r_e},t)-\sum_{v_k\in
  N(v_{n-1})}\frac{r_{k,n-1}}{1-tr_{n-1,n}}Q_k(\underline{r_e},t)$$
we have 
$$0=(1-tr_{n-1,n})F_{T'}(\underline{r'_e},t)=(1-tr_{n-1,n})Q_0(
\underline{r_e},t)-\sum_{v_k\in
  N(v_{n-1})}r_{k,n-1}Q_k(\underline{r_e},t)=F_{T}(\underline{r_e},t).$$
Hence $t$ is the root of $F_{T}(\underline{r_e},s)$.   
\end{proof}

\begin{theorem} \label{TT2} Let $T$ be a tree and let $\gamma_e=1-r_e$ be edge
  densities. 
Then the edge densities ensure the existence of the tree $T$ as a transversal
if and only if for the multivariate matching polynomial we have
$$F(\underline{r_e},t)>0$$
for all $t\in [0,1]$.
\end{theorem}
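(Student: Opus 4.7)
The plan is to iterate the algebraic identity from the proof of Lemma~\ref{TL2} along a sequence of leaf removals, obtaining a product expansion for $F_T(\underline{r_e},t)$ whose factors can be read off directly from the internal state of Algorithm~\ref{tree-alg} run on the one-parameter family of densities $\gamma_e(t)=1-tr_e$.

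I would first fix an arbitrary leaf-removal order $v_{i_1},\ldots,v_{i_{n-2}}$ and write $e_k$ for the leaf-edge deleted at step $k+1$. Iterating the identity
\[
F_{T_j}(\underline{r_e^{(j)}},t)=(1-\rho_j(t))\,F_{T_{j+1}}(\underline{r_e^{(j+1)}},t)
\]
extracted from the proof of Lemma~\ref{TL2} (here $r_e^{(j)}$ are the $r$-parameters of the reduced tree $T_j$ and $\rho_j(t)=t\cdot r_{e_j}^{(j)}$ is the algorithm's current $r$-value of the edge $e_j$ at parameter $t$) yields the clean product expansion
\[
F_T(\underline{r_e},t)=\prod_{k=0}^{n-2}\bigl(1-\rho_k(t)\bigr).
\]
Each $\rho_k(t)$ is an explicit rational function of $t$ with $\rho_k(0)=0$, and by induction on $k$ it is continuous and strictly increasing on any sub-interval of $[0,1]$ on which all earlier factors $(1-\rho_j)$, $j<k$, remain strictly positive; this is immediate from the update rule $r\mapsto r/(1-r_0)$, which is increasing in each argument whenever $r_0<1$. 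An additional easy observation is that during the algorithm's run the current $r$-value of \emph{any} edge (not only the one being removed) is bounded above by the $\rho_j$ attached to it at the step it is ultimately deleted, so all Step~1 inequality checks of Algorithm~\ref{tree-alg} reduce to the checks $\rho_k<1$.

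Granting the above, both directions of the theorem follow. Suppose $F_T(\underline{r_e},t)>0$ for all $t\in[0,1]$. Then no factor $(1-\rho_k(t))$ may vanish on $[0,1]$; since $\rho_0(0)=0$ and $\rho_0$ is continuous on $[0,1]$, the intermediate value theorem forces $\rho_0(t)<1$ throughout $[0,1]$, which keeps the denominator defining $\rho_1$ positive so that the same argument applies to $\rho_1$, and so on inductively. Hence $\rho_k(1)<1$ for every $k$, so Algorithm~\ref{tree-alg} succeeds at $t=1$, and iterating Theorem~\ref{TT1} concludes that the densities $\gamma_e$ force the factor $T$. Conversely, if the densities ensure $T$ then iterated Theorem~\ref{TT1} gives $\rho_k(1)<1$ for every $k$, and monotonicity of each $\rho_k$ lifts this to $\rho_k(t)<1$ on all of $[0,1]$, making every factor and thus $F_T(\underline{r_e},t)$ itself strictly positive on $[0,1]$.

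The delicate technical point is the cascading monotonicity argument: one has to rule out the scenario where some $\rho_k(t)$ momentarily exceeds $1$ on $[0,1]$ and then returns below it, which could in principle restore positivity of the product $F_T$ without the algorithm succeeding. This is excluded precisely by the step-by-step induction on $k$, since any first crossing of the level $1$ by some $\rho_k$ forces the corresponding factor of $F_T$ to vanish, so the standing assumption $F_T>0$ on $[0,1]$ prevents such excursions from occurring in the first place.
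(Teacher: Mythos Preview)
Your product expansion $F_T(\underline{r_e},t)=\prod_{k=0}^{n-2}(1-\rho_k(t))$ is a genuinely different and cleaner route than the paper's. For the hard direction the paper argues by contradiction: it locates a $t_0\in[0,1]$ at which the algorithm returns a violating edge with $r$-value exactly $1$, invokes Lemma~\ref{TL2} to identify $t_0$ as a root of $F_{T_1}$ for the subtree $T_1$ already processed, and then appeals to Corollary~\ref{TC1} (which in turn rests on the adjacency-matrix realization of Lemma~\ref{char-match}) to push a root of $F_T$ below $t_0$. Your product formula bypasses this spectral detour entirely, and the forward direction via monotonicity of the $\rho_k$ is correct as written.

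That said, the ``step-by-step induction on $k$'' you describe for the reverse direction does not close the argument. Under the inductive hypothesis $\rho_0,\dots,\rho_{k-1}<1$ on $[0,1]$, if $\rho_k(t_1)=1$ you want $F_T(t_1)=0$; but the only identity available at $t_1$ is the \emph{partial} factorization $F_T=\prod_{j<k}(1-\rho_j)\cdot F_{T_k}(\underline{r^{(k)}},t)$, and nothing forces $F_{T_k}(\underline{r^{(k)}(t_1)},t_1)=0$ merely because its leaf edge carries effective weight $1$ --- the later factors $(1-\rho_{k+1}),(1-\rho_{k+2}),\dots$ may diverge and compensate. The fix, which your final paragraph gestures at but does not implement, is to drop the induction on $k$ and instead set $t^*=\sup\{t\in[0,1]:\rho_j(s)<1\text{ for all }j\text{ and all }s\le t\}$. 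On $[0,t^*)$ the \emph{full} product holds and every $\rho_j$ is monotone and bounded above by $1$, so each has a limit $L_j\le 1$; continuity of $F_T$ then gives $F_T(t^*)=\prod_j(1-L_j)$, and the standing hypothesis $F_T(t^*)>0$ forces every $L_j<1$, after which a short induction on $j$ shows each $\rho_j$ extends continuously to $t^*$ with value $L_j<1$, contradicting maximality of $t^*$ unless $t^*=1$ and all $\rho_j(1)<1$. (A minor notational point: writing $\rho_j(t)=t\cdot r_{e_j}^{(j)}$ is misleading, since the $r_{e_j}^{(j)}$ produced by iterating the identity of Lemma~\ref{TL2} already depend on $t$ through the denominators $1-\rho_i(t)$; it is cleaner to run Algorithm~\ref{tree-alg} from initial data $tr_e$ and take $\rho_j(t)$ to be the resulting $r$-value at deletion.)
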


\begin{remark} We mention that the really hard part of this theorem is that if 
$$F(\underline{r_e},t)>0$$
for all $t\in [0,1]$ then the edge densities $\gamma_e=1-r_e$ ensure the
existence of the tree $T$ as a transversal. Later we will prove that this is
true for every graph $H$, see Theorem~\ref{C1}.
\end{remark}

\begin{proof} We prove the theorem by induction on the number of vertices. We
  will use Theorem~\ref{TT1}. First we show that if the edge
  densities ensure the existence of the factor $T$
  then $$F(\underline{r_e},t)>0$$ for all $t\in [0,1]$.

Clearly,
$$ F(\underline{r_e},t)=F(\underline{r_et},1).$$
It is also trivial  that the  densities $\gamma_e=1-r_e$ ensure the existence
of a factor $T$ then the densities $\gamma_e=1-tr_e$ $(t\in [0,1])$ ensure the
existence of factor $T$. Hence we only need to prove that if the  densities
$\gamma_e=1-r_e$ ensure the existence of factor $T$ then
$F(\underline{r_e},1)>0$.

We will use the notations of Theorem~\ref{TT1}.
By induction and Theorem~\ref{TT1} we have
$F_{T'}(\underline{r'_e},1)>0$. Now we repeat the argument of Lemma~\ref{TL2}.

As before, we can expand $F_{T'}$ 
according to whether a monomial contains $x_{k,n-1}$ ($e_{k,n-1}\in E(T')$) or
not. Each monomial can contain at most one of the variables $x_{k,n-1}$ ($v_k\in
N(v_{n-1})$). Thus 
$$F_{T'}(\underline{x_e},t)=Q_0(\underline{x_e},t)-\sum_{v_k\in
  N(v_{n-1})}tx_{k,n-1}Q_k(\underline{x_e},t),$$
where $Q_0$ consists of those terms which contain no $x_{k,n-1}$ and
$-tx_{k,n-1}Q_k$ consists of those terms which contain $x_{k,n-1}$, i.e.,
these terms correspond to the matchings containing the edge $(v_k,v_{n-1})$.
We have
$$F_{T}(\underline{x_e},t)=(1-tx_{n-1,n})Q_0(\underline{x_e},t)-\sum_{v_k\in
  N(v_{n-1})}tx_{k,n-1}Q_k(\underline{x_e},t)$$
by the same argument.

Hence  
$$0<F_{T'}(\underline{r'_e},1)=Q_0(
\underline{r_e},1)-\sum_{v_k\in
  N(v_{n-1})}\frac{r_{k,n-1}}{1-r_{n-1,n}}Q_k(\underline{r_e},1).$$ 

So we get that
$$0<(1-r_{n-1,n})F_{T'}(\underline{r'_e},1)=(1-r_{n-1,n})Q_0(
\underline{r_e},1)-\sum_{v_k\in
  N(v_{n-1})}r_{k,n-1}Q_k(\underline{r_e},1)=F_{T}(\underline{r_e},1).$$  
This completes one direction of the proof.
\bigskip

Now we assume that $F(\underline{r_e},t)>0$ for all $t\in [0,1]$. We prove by
contrary that the edge densities $\gamma_e$'s ensure the existence of factor
$T$. Assume that the Algorithm~\ref{tree-alg} stops with 
some $r_{e^{\circ}}\geq 1$. We will call $e^{\circ}$ the violating edge. In the next step we show that for some
$t\in [0,1]$ we can ensure that the algorithm stops
with $r_{e^{\circ}}(t)=1$ when we start with the edge densities
$\gamma_e=1-tr_e$. 

First of all, let us examine what
happens if we decrease the $r_e$'s. If $0<r_e\leq r^*_e$ and $0<r_f\leq r^*_f$
then 
$$\frac{r_e}{1-r_f}\leq \frac{r^*_e}{1-r^*_f}.$$
Hence all $r_i$'s decrease under the algorithm if we decrease $t$.
 
If we set $t=0$ then for the edge densities $\gamma_e=1-tr_e$ the algorithm
gives $1$ for all densities which show up. Since changing $t$ continuously all
densities will change continuously we 
can  choose an appropriate $t\in [0,1]$ for which running our algorithm with
$tr_e$'s instead of $r_e$'s we can assume that the algorithm stops with
$r_{e^{\circ}}(t)= 1$.  

Now consider those vertices and edges
together with the violating edge which were deleted under executing the
algorithm. These edges form a forest. Consider the components of this forest
which contains the violating edge. Let us call this subtree $T_1$. According to
Lemma~\ref{TL2}  our chosen $t$ is the root of the matching polynomial of $T_1$
(clearly, only the deleted edges modified the weight of the violating edge).
On the other hand, we know from Corollary~\ref{TC1} that the matching
polynomial of $T$ has a smaller root than the matching polynomial of $T_1$.
This means that the matching polynomial of $T$ has a root in the interval
$[0,1]$ contradicting the condition of the theorem.  

\end{proof}

\begin{cor} Let $T$ be a tree and assume that all edge densities $\gamma_e$
  satisfy $\gamma_e>1-\frac{1}{\lambda(T)^2}$ where $\lambda(T)$ is the largest
  eigenvalue of the adjacency matrix of $T$. Then $\gamma_e$'s ensure the
  existence of factor $T$. If all $\gamma=1-\frac{1}{\lambda(T)^2}$ then
  there exists a weighted blow-up of $T$ not containing $T$ as a transversal.
In other words,
   $$d_{crit}(T)=1-\frac{1}{\lambda(T)^2}.$$
\end{cor}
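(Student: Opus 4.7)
The plan is to reduce the corollary to Theorem~\ref{TT2} by translating its polynomial-positivity hypothesis into a spectral condition via Lemma~\ref{char-match}. For weights $\underline{r_e}\geq 0$, let $A'(\underline{r_e})$ denote the symmetric $n\times n$ matrix whose $(i,j)$-entry is $\sqrt{r_{ij}}$ if $v_iv_j\in E(T)$ and $0$ otherwise. Lemma~\ref{char-match} identifies $\det(tI-A')=t^nF(\underline{r_e},1/t^2)$, so the positive roots of $F(\underline{r_e},\cdot)$ are precisely the numbers $1/\mu^2$ as $\mu$ runs over the nonzero eigenvalues of $A'$; in particular the smallest positive root equals $1/\rho(A')^2$, where $\rho$ denotes the spectral radius. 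Since $F(\underline{r_e},0)=1>0$, the condition ``$F(\underline{r_e},t)>0$ for every $t\in[0,1]$'' of Theorem~\ref{TT2} is equivalent to $\rho(A')<1$.

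For the sufficient direction I assume $\gamma_e>1-1/\lambda(T)^2$ for every edge, equivalently $\sqrt{r_e}<1/\lambda(T)$. Then $A'(\underline{r_e})\leq \frac{1}{\lambda(T)}A(T)$ entrywise, with strict inequality in every nonzero position, and both matrices are irreducible because $T$ is connected. Strict monotonicity of the Perron eigenvalue for irreducible nonnegative matrices gives $\rho(A'(\underline{r_e}))<\rho(A(T)/\lambda(T))=1$, so Theorem~\ref{TT2} applies and produces the transversal $T$.

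For the matching lower bound I take $r_e=1/\lambda(T)^2$ uniformly, so that $A'=A(T)/\lambda(T)$ has spectral radius exactly $1$ and hence $F(\underline{r_e},1)=0$. The hypothesis of Theorem~\ref{TT2} thus fails, and the easy direction of that theorem (applied contrapositively) furnishes a blow-up $G[T]$ with every edge density at least $1-1/\lambda(T)^2$ that contains no $T$-transversal. Combined with the first part this pins down $d_{crit}(T)=1-1/\lambda(T)^2$, and the theorem immediately following Theorem~\ref{ZolTh1} then upgrades this to a weighted blow-up in which every edge density is exactly the critical value.

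The main obstacle is the strict-monotonicity step in the sufficient direction: without invoking the Perron--Frobenius principle for irreducible nonnegative matrices (and hence without using connectedness of $T$) one could only conclude $\rho(A')\leq 1$, which fails to exclude a root of $F(\underline{r_e},\cdot)$ at the endpoint $t=1$ and therefore does not meet the strict positivity demanded by Theorem~\ref{TT2}. Everything else is routine bookkeeping combining Lemma~\ref{char-match} with the two directions of Theorem~\ref{TT2}.
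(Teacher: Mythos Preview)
Your argument is correct, but it takes a somewhat different route from the paper's. Both proofs reduce the corollary to Theorem~\ref{TT2} via Lemma~\ref{char-match}; the divergence is in how the positivity hypothesis $F(\underline{r_e},t)>0$ on $[0,1]$ is verified. The paper first observes that one may replace the given densities by the uniform choice $\gamma_e\equiv 1-d$ with $d=\max_e r_e<1/\lambda(T)^2$ (larger densities can only help), after which $A'=\sqrt{d}\,A(T)$ and the spectral radius computation $\rho(A')=\sqrt{d}\,\lambda(T)<1$ is immediate. You instead keep the possibly non-uniform $r_e$'s and invoke the strict Perron--Frobenius monotonicity $\rho(A')<\rho(A(T)/\lambda(T))=1$. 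Your route is more direct and handles the inhomogeneous densities in one stroke, at the cost of appealing to a less elementary fact; the paper's reduction to the homogeneous case avoids Perron--Frobenius entirely.

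One small imprecision: you assert that both $A'$ and $A(T)/\lambda(T)$ are irreducible because $T$ is connected, but if some $r_e=0$ (i.e.\ $\gamma_e=1$) then $A'$ need not be irreducible. This does not damage the argument, since the strict monotonicity statement you need only requires the \emph{larger} matrix $A(T)/\lambda(T)$ to be irreducible together with $A'\leq A(T)/\lambda(T)$ and $A'\neq A(T)/\lambda(T)$; both of these hold here. Your treatment of the lower bound (taking $r_e\equiv 1/\lambda(T)^2$, observing $F(\underline{r_e},1)=0$, and applying the converse direction of Theorem~\ref{TT2}) agrees with the paper's.
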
 

\begin{proof} We can assume that all edge densities are equal to $1-d>
  1-\frac{1}{\lambda^2}$. In this case $dt<\frac{1}{\lambda(T)^2}$ for all $t\in
  [0,1]$ and so
  $$0<\phi_T(\frac{1}{\sqrt{dt}})=(dt)^{-n/2}F_T(\underline{dt},1)=(dt)^{-n/2}F_T(\underline{d},t)$$      
by Lemma~\ref{char-match}.
  By Theorem~\ref{TT2} this implies that the set of edge densities
  $\{\gamma_e\}$ ensure the existence of factor $T$. Theorem~\ref{TT2} also
  implies that  there exists a weighted blow-up with weights
  $\gamma=1-\frac{1}{\lambda(T)^2}$ of $T$ not containing $T$ as a
  transversal.    
\end{proof}

Finally we recall a structure theorem concerning the critical
edge density of trees.  

\begin{prop} {\rm \cite{nagy1}} Let $T$ be a tree. Let us consider the following
  blow-up graph $G[T]$ of $T$. Let the cluster $A_i$ consist of the vertices
  $v_{ij}$ where $j\in N(i)$. If $(i,j)\in E(T)$ then we connect all vertices
  of $A_i$ and $A_j$ except $v_{ij}$ and $v_{ji}$. Then $G[T]$ does not
  contain $T$ as a transversal. 
\end{prop}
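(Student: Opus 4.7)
The plan is a short counting argument showing that any would-be transversal is forced onto one of the deleted edges of the construction. Suppose for contradiction that $T$ appears as a transversal of $G[T]$. Picking one vertex $v_{i,f(i)}\in A_i$ from each cluster encodes a function $f\colon V(T)\to V(T)$ with $f(i)\in N(i)$ for all $i$. By the construction, for every edge $(i,j)\in E(T)$ the only missing edge between $A_i$ and $A_j$ in $G[T]$ is the pair $\{v_{ij},v_{ji}\}$. Hence the selection yields a transversal if and only if for no tree edge $(i,j)$ do we have simultaneously $f(i)=j$ and $f(j)=i$.

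The key step will be a pigeonhole on the arcs of the functional digraph of $f$. I would consider the $n$ arcs $i\to f(i)$; each arc sits above the undirected tree edge $\{i,f(i)\}\in E(T)$, and a given tree edge $\{i,j\}$ receives two such arcs precisely when $f(i)=j$ and $f(j)=i$. Since $|V(T)|=n$ while $|E(T)|=n-1$, at least one edge of $T$ must receive two arcs, producing an edge $(i,j)$ with $f(i)=j$ and $f(j)=i$. This contradicts the construction, since on that edge the transversal is forced to take $v_{ij}\in A_i$ and $v_{ji}\in A_j$, which is exactly the removed pair.

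I do not anticipate any serious obstacle. The only slightly non-routine ingredient is the reformulation of the transversal selection as a function $f$ with $f(i)\in N(i)$; once this viewpoint is adopted, the gap $n-(n-1)=1$ between vertex and edge counts drives the whole argument by itself, with no further computation needed.
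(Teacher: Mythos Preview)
Your proof is correct and is essentially the same pigeonhole argument as the paper's: the paper observes that the $2(n-1)$ vertices of $G[T]$ decompose into the $n-1$ complementary pairs $\{v_{ij},v_{ji}\}$, so any choice of $n$ vertices must hit both endpoints of some pair. Your encoding via the function $f$ and the ``$n$ arcs over $n-1$ tree edges'' is just a relabeling of the same count, with the arc $i\to f(i)$ playing the role of the chosen vertex $v_{i,f(i)}$ and the underlying tree edge playing the role of the pair it belongs to.
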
 

\begin{figure}[h!]
\begin{center}
\scalebox{.35}{\includegraphics{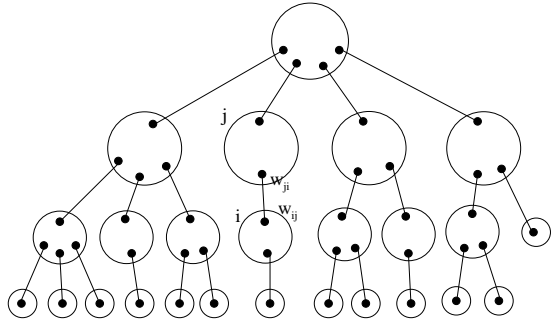}}   \caption{The complement of
  a special blow-up graph of a tree.} 
\end{center}
\end{figure}

\begin{proof} We have to prove that one cannot avoid choosing both endvertices
  of a complementary edge $(v_{ij},v_{ji})$ if one chooses one vertex from
  each cluster. This is indeed true since the set of all vertices of $G[T]$
  can be decomposed to  $(n-1)$ such pairs. Since we have to choose $n$
  vertices we have to choose both vertex from such a pair.
\end{proof}

We show that we can give weights to the vertices of the above constructed
$G[T]$ such that the density will be $1-\frac{1}{\lambda^2}$ where
$\lambda=\lambda(T)$. The following weighting was the idea of Andr\'as G\'acs
\cite{gacs}. 

Recall that there exists a non-negative eigenvector $\underline{x}$ belonging to
the largest eigenvalue $\lambda$ of $T$. So if $v_i$'s are the vertices of $T$
we have 
$$\lambda x_i=\sum_{j\in N(i)}x_j$$
for all $i$. 
Now let us define the weight $w_{ij}$ of the vertex $v_{ij}$ of $G[T]$ as
follows: $w_{ij}=\frac{x_j}{\lambda x_i}\geq 0$. Then we have
$$w(A_i)=\sum_{j\in N(i)}w_{ij}= \sum_{j\in N(i)}\frac{x_j}{\lambda x_i}=1.$$
Furthermore,
$$d(A_i,A_j)=1-w_{ij}w_{ji}=1-\frac{x_j}{\lambda x_i}\frac{x_i}{\lambda
  x_j}=1-\frac{1}{\lambda^2}.$$


\section{General bounds } 

Our next aim is to prove good bounds on the critical edge density. Recall that
$(1-\frac{1}{\Delta(H)})\leq d_{crit}(H)\leq (1-\frac{1}{\Delta^2(H)})$ was
known before, see Proposition~\ref{becsles}. 
Our approach is probabilistic. First we give a bound applying the Lov\'asz
local lemma. In fact, we can copy the argument of \cite{alon}. 

\begin{theorem}\label{LLL} {\rm(Lov\'asz local lemma, symmetric case,
    \cite{alon}.)} Let 
  $A_1,A_2,\dots,A_n$ be events in an arbitrary probability space. Suppose
  that each event $A_i$ is mutually independent of all other events, but at
  most $\Delta$ of them. Furthermore, assume that for each $i$,
$$\mathbb{P}(A_i)\leq \frac{1}{e(\Delta+1)},$$
where $e$ is the base of the natural logarithm.
Then
$$\mathbb{P}(\cap_{i=1}^n \overline{A_i})>0.$$
\end{theorem}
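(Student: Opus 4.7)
The plan is to prove the more general asymmetric form of the local lemma first and deduce the symmetric statement as a corollary. The asymmetric form asserts: if one can assign reals $x_i \in [0,1)$ to each event $A_i$ such that
\[
\mathbb{P}(A_i) \leq x_i \prod_{j \in N(i)} (1-x_j),
\]
where $N(i)$ indexes the events on which $A_i$ depends, then $\mathbb{P}(\cap_{i=1}^n \overline{A_i}) \geq \prod_i (1-x_i) > 0$.

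The heart of the argument is an inductive claim: for every index $i$ and every subset $S \subseteq \{1,\dots,n\} \setminus \{i\}$ with $\mathbb{P}(\cap_{j\in S}\overline{A_j}) > 0$,
\[
\mathbb{P}(A_i \mid \cap_{j\in S}\overline{A_j}) \leq x_i.
\]
I would prove this by induction on $|S|$. The base case $S=\emptyset$ is the hypothesis. For the inductive step, split $S = S_1 \cup S_2$ with $S_1 = S \cap N(i)$ and $S_2 = S \setminus N(i)$. Because $A_i$ is mutually independent of the events indexed by $S_2$, the conditional probability $\mathbb{P}(A_i \mid \cap_{j \in S_1}\overline{A_j} \cap \cap_{j \in S_2}\overline{A_j})$ rewrites as a ratio whose numerator is at most $\mathbb{P}(A_i) \leq x_i \prod_{j \in N(i)}(1-x_j)$, and whose denominator $\mathbb{P}(\cap_{j \in S_1}\overline{A_j} \mid \cap_{j \in S_2}\overline{A_j})$ is bounded below by $\prod_{j \in S_1}(1 - x_j)$ via a telescoping expansion combined with the induction hypothesis applied to strictly smaller conditioning sets obtained by peeling off the events of $S_1$ one by one. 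The ratio is then at most $x_i \prod_{j \in N(i) \setminus S_1}(1-x_j) \leq x_i$.

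From this claim, the conclusion of the asymmetric LLL follows by writing $\mathbb{P}(\cap_{i=1}^n \overline{A_i})$ as the telescoping product $\prod_{i=1}^n \mathbb{P}(\overline{A_i} \mid \overline{A_1} \cap \cdots \cap \overline{A_{i-1}}) \geq \prod_{i=1}^n (1-x_i) > 0$. To deduce the symmetric statement of the theorem, I would set $x_i := 1/(\Delta+1)$ for every $i$; since $|N(i)| \leq \Delta$, the required hypothesis becomes
\[
\mathbb{P}(A_i) \leq \frac{1}{\Delta+1}\Bigl(1 - \frac{1}{\Delta+1}\Bigr)^{\Delta},
\]
and the elementary bound $(1 - 1/(\Delta+1))^{\Delta} \geq 1/e$ shows that the assumption $\mathbb{P}(A_i) \leq 1/(e(\Delta+1))$ suffices.

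The main obstacle is the bookkeeping in the inductive step: one must verify that every conditioning event introduced has strictly positive probability (so that the conditionals are well-defined), and one must be careful to apply the induction hypothesis only to strictly smaller conditioning sets at each peel. A clean way to handle both issues at once is to strengthen the inductive statement so that it simultaneously asserts $\mathbb{P}(\cap_{j \in S}\overline{A_j}) > 0$ and the bound on the conditional probability; the positivity is then automatic from $\mathbb{P}(\overline{A_j} \mid \cdots) \geq 1 - x_j > 0$ at every step of the telescoping expansion.
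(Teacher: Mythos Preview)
The paper does not prove this theorem; it is quoted from Alon--Spencer \cite{alon} as a known result and used as a black box. Your proposal is the standard textbook proof (asymmetric local lemma via induction on the size of the conditioning set, then specialization with $x_i = 1/(\Delta+1)$), and it is correct.
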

   
\begin{theorem}\label{LLLT1} Let $\Delta$ be the largest degree of the graph $H$
  and let $d_{crit}(H)$ be the critical edge density. Then
$$d_{crit}(H)\leq 1-\frac{1}{e(2\Delta-1)},$$
where $e$ is the base of the natural logarithm.
\end{theorem}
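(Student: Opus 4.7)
The plan is a direct application of the symmetric Lovász local lemma (Theorem~\ref{LLL}) to a random transversal. Suppose for contradiction that $d_{crit}(H)>1-\tfrac{1}{e(2\Delta-1)}$. By Theorem~\ref{ZolTh1} (or rather its strengthening Theorem~2.3) there is a weighted blow-up $G[H]$ with $d(A_i,A_j)=d_{crit}(H)$ for every $(v_i,v_j)\in E(H)$ and no $H$-transversal. Independently for each $i$, I would sample a vertex $u_i\in A_i$ with probability $w(u_i)$. For every edge $e=(v_i,v_j)\in E(H)$ define the bad event
$$X_e=\bigl\{\, u_iu_j\notin E(G[H])\,\bigr\}.$$
A transversal copy of $H$ appears exactly when none of the $X_e$ occur, so the goal becomes $\mathbb{P}\bigl(\bigcap_{e}\overline{X_e}\bigr)>0$.

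Two ingredients are needed. First, the probability estimate follows immediately from the definition of the weighted density:
$$\mathbb{P}(X_e)=\sum_{u\in A_i,\,v\in A_j,\ (u,v)\notin E(G[H])}w(u)\,w(v)=1-d(A_i,A_j)<\frac{1}{e(2\Delta-1)}.$$
Second, the event $X_e$ is measurable with respect to the random choices in only the two clusters $A_i,A_j$ at the endpoints of $e$. Consequently $X_e$ is mutually independent of the family of all $X_{e'}$ with $e'$ vertex-disjoint from $e$, and the number of edges of $H$ that \emph{do} share a vertex with $e$ (besides $e$ itself) is at most $(D_i-1)+(D_j-1)\le 2\Delta-2$.

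Plugging $\Delta_{LLL}=2\Delta-2$ into Theorem~\ref{LLL} demands exactly $\mathbb{P}(X_e)\le\tfrac{1}{e(\Delta_{LLL}+1)}=\tfrac{1}{e(2\Delta-1)}$, which is what we have. Hence $\mathbb{P}\bigl(\bigcap_e\overline{X_e}\bigr)>0$, producing a transversal $H$ in $G[H]$ and contradicting the assumption. I do not foresee a real obstacle: the algebraic coincidence $2(\Delta-1)+1=2\Delta-1$ is exactly what makes the LLL threshold match the density bound, and sampling one vertex per cluster with weights $w(u_i)$ turns the weighted density into a probability for free. The only cosmetic choice is whether to invoke Theorem~\ref{ZolTh1} at all; one could equally well work with a general (unweighted) blow-up and sample $u_i\in A_i$ uniformly, since then $\mathbb{P}(X_e)=1-d(A_i,A_j)$ still holds and the same calculation goes through for all densities strictly exceeding $1-\tfrac{1}{e(2\Delta-1)}$.
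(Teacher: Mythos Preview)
Your argument is correct and follows the same high-level strategy as the paper—prove by contradiction, sample one vertex per cluster independently, and apply the symmetric Lov\'asz local lemma—but you organise the bad events differently. The paper (copying the argument from Alon--Spencer) first reduces to equal-size unweighted clusters and then introduces one bad event $A_f$ for every individual edge $f$ of the complement $\overline{G[H]|H}$; each $A_f$ has probability $1/N^2$ and depends on at most $(2\Delta-1)rN^2$ others, so the $N^2$ factors cancel in the LLL condition. You instead introduce a single bad event $X_e$ per edge $e\in E(H)$, with $\mathbb{P}(X_e)=1-d(A_i,A_j)$ directly, and the dependency degree is simply the number of edges of $H$ meeting $e$, namely $\le 2\Delta-2$. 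This aggregation is cleaner: it works verbatim for weighted blow-ups (no rational-approximation step needed), and the arithmetic $2(\Delta-1)+1=2\Delta-1$ becomes transparent. Both routes give exactly the same bound; yours just hides the $N^2$ bookkeeping inside the definition of $X_e$.
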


\begin{proof} We prove by contradiction. Assume that there exists a blow-up
  graph $G[H]$ of the graph $H$ with edge densities greater than
  $1-\frac{1}{e(2\Delta-1)}$ which does not induce $H$. 

We can assume that all classes of the blow-up graph $G[H]$
contain exactly $N$ vertices. Indeed, we can approximate each weight by a
rational number so that every edge densities are still larger than
$1-\frac{1}{e(2\Delta-1)}$. Then we ``blow up'' the construction by the common
denominator of all weights.

Let us choose a vertex from each class with
equal probability $1/N$ independently of each other. Let $f$ be an edge of
the complement of the graph $G[H]$ with respect to $H$. Let $A_f$ be the event
that we have chosen both endnodes of the edge $f$ (clearly, a bad event we would
like to avoid).  Then $\mathbb{P}(A_f)=1/N^2$ and $A_f$ is independent from
all events $A_{f'}$ where the edge $f'$ has endvertices in different
classes. Thus $A_f$ is independent from all, but at most $(2\Delta-1)rN^2$ bad
events where $r=1-d_{crit}(H)$. Since $r<\frac{1}{e(2\Delta-1)}$ the condition of
Lov\'asz local lemma is satisfied and gives that
$$\mathbb{P}(\cap_{f\in E(\overline{G[H]|H})}A_f)>0.$$
which means that  $G[H]$ induces the graph $H$ (with positive probability)
contradicting the assumption.
\end{proof}

Next, we use a generalization of the Lov\'asz local lemma to improve on the bound
of Theorem \ref{LLLT1}. 
\bigskip

\begin{theorem}{\rm \label{Ath1}[Scott-Sokal] \cite{sokal1} }
Assume that
given a graph $G$ and there is an event  $A_i$ assigned to each node
$i$. Assume that $A_i$ is mutually independent of the events $\{A_k\ |\ (i,k)\in
E(\overline{G})\}$. Set $\mathbb{P}(A_i)=p_i$.
\medskip

\noindent (a)  \textit{Assume that $I((G,\underline{p}),t)>0$ for all $t\in
[0,1]$. Then we have }
$$\mathbb{P}(\cap_{i\in V(G)} \overline{A_i})\geq I((G,\underline{p}),1)>0.$$
\medskip

\noindent (b)  \textit{Assume that $I((G,\underline{p}),t)=0$ for some $t\in
  [0,1]$. 
Then there exists a probability space and a family of events $B_i$ with
$\mathbb{P}(B_i)\geq p_i$ and with dependency graph $G$ such that}
$$\mathbb{P}(\cap_{i\in V(G)} \overline{B_i})=0.$$

\end{theorem}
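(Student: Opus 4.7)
The approach behind this statement is due to Scott and Sokal, and generalises Shearer's inequality. The pivotal identity is the deletion-absorption recurrence for the weighted independence polynomial: for every $v\in V(G)$,
\begin{equation*}
I((G,\underline{p}),t) \;=\; I((G-v,\underline{p}),t) \;-\; t\,p_v\,I((G-N[v],\underline{p}),t),
\end{equation*}
obtained by splitting the independent sets of $G$ according to whether they contain $v$. Below, $G_T$ denotes the induced subgraph of $G$ on $T\subseteq V(G)$.

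For part (a), I would prove by induction on $|S|$ the key conditional-probability estimate
\begin{equation*}
\mathbb{P}\bigl(A_v \,\big|\, \textstyle\bigcap_{i\in S}\overline{A_i}\bigr) \;\le\; p_v\cdot\frac{I((G_{S\setminus N(v)},\underline{p}),1)}{I((G_S,\underline{p}),1)},
\end{equation*}
valid whenever $v\notin S$ and $I((G_T,\underline{p}),t)>0$ on $[0,1]$ for every $T\subseteq S\cup\{v\}$. The base case $S\cap N(v)=\emptyset$ is immediate from the mutual-independence hypothesis. The inductive step picks $u\in S\cap N(v)$, conditions on $A_u$ versus $\overline{A_u}$, applies the inductive hypothesis to the strictly smaller conditioning sets, and simplifies using the recurrence. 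Once the lemma is in hand, order $V(G)=\{v_1,\ldots,v_n\}$ arbitrarily and telescope
\begin{equation*}
\mathbb{P}\bigl(\textstyle\bigcap_{i}\overline{A_i}\bigr) \;=\; \prod_{j=1}^{n} \bigl(1 - \mathbb{P}(A_{v_j}\,|\,\textstyle\bigcap_{k>j}\overline{A_{v_k}})\bigr),
\end{equation*}
applying the lemma to each factor; the recurrence collapses the product exactly to $I((G,\underline{p}),1)$.

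For part (b), the plan is to construct a probability space realising equality, by building a suitable hard-core lattice gas on $G$. Given $t^*\in[0,1]$ with $I((G,\underline{p}),t^*)=0$, one indexes the atoms of a finite space by independent sets of $G$, with the atom $S$ receiving weight proportional to $(t^*)^{|S|}\prod_{v\in S}p_v$ after a sign-splitting of the signed weight $(-1)^{|S|}\prod_{v\in S}p_v$ into positive and negative parts (folded into two copies of the space). Defining $B_v=\{\omega:v\in\omega\}$, the hard-core structure makes adjacent $B_v,B_w$ in $G$ genuinely dependent, and one checks that $\mathbb{P}(B_v)\ge p_v$ while the empty-set atom $\bigcap_v\overline{B_v}$ carries total mass $I((G,\underline{p}),t^*)=0$. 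A final infinitesimal perturbation on non-edge pairs ensures the dependency graph equals $G$ itself rather than a proper subgraph.

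The main obstacle is the inductive step of the conditional-probability lemma: conditioning on $A_u$ versus $\overline{A_u}$ splits into two inductive bounds pointing in opposite directions, and converting their combination back into a single upper bound requires both the deletion-absorption recurrence and strict positivity of $I$ on every induced subgraph of $S\cup\{v\}$. This positivity, forced by the standing hypothesis $I((G,\underline{p}),t)>0$ on $[0,1]$ via a continuity/monotonicity argument on the induced subgraphs, is exactly what separates (a) from (b)—where the failure $I((G,\underline{p}),t^*)=0$ is what permits the extremal construction.
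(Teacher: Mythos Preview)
The paper does not prove this statement: Theorem~\ref{Ath1} is quoted with attribution to Scott and Sokal \cite{sokal1} and then used as a black box in the proof of Theorem~\ref{C1}. There is consequently no argument in the paper against which to compare yours.

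Assessing your sketch on its own merits: part (a) follows the standard Shearer-type route and is essentially correct. The conditional-probability lemma you state is the right one, and once it is established the deletion--absorption recurrence makes the telescoping product collapse exactly to $I((G,\underline{p}),1)$. One point you gloss over is that the hypothesis $I((G,\underline{p}),t)>0$ on $[0,1]$ must be shown to propagate to \emph{every} induced subgraph before the induction can start; this is true (it follows from interlacing/monotonicity of the smallest root under vertex deletion), but it deserves an explicit sentence.

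Part (b), however, has a real gap. The construction you describe is not well defined: the weights $(t^*)^{|S|}\prod_{v\in S}p_v$ are already non-negative, so there is nothing to ``sign-split'', and if you use them directly as atom masses then the empty-set atom carries mass proportional to $1$, not to $I((G,\underline{p}),t^*)$. You appear to be conflating the alternating-sign polynomial $I$ with its all-positive-term counterpart, and the phrase ``folded into two copies of the space'' does not explain how to manufacture a genuine probability measure in which $\mathbb{P}(B_v)\ge p_v$ and $\mathbb{P}(\cap_v\overline{B_v})=0$ hold simultaneously. Scott and Sokal's actual argument for (b) is not a single global lattice-gas measure with sign corrections; it builds the extremal events inductively, vertex by vertex, so that equality is attained at every step of the conditional-probability lemma from (a). Your sketch would need to be replaced by that inductive construction (or something equivalent) to become a proof.
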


\begin{theorem} \label{C1} Assume that for the  graph $H$ we have
  $F_H(\underline{r_e},t)>0$ 
  for all $t\in [0,1]$ and some weights $r_e\in [0,1]$ assigned to each
  edge. Then the densities $\gamma_e=1-r_e$ ensure the existence of $H$ as a
  transversal. 
\end{theorem}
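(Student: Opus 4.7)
I would argue by contradiction via the probabilistic method, using the Scott--Sokal theorem (Theorem~\ref{Ath1}(a)) as the key tool, in direct analogy with the tree case (Theorem~\ref{TT2}). The identity
$$F_H(\underline{x_e},t)=I((L_H,\underline{x_e}),t),$$
recorded right after the definition of the multivariate matching polynomial, reformulates the hypothesis as a positivity statement about the independence polynomial of the line graph $L_H$ with vertex weights $r_e$, which is exactly the input Scott--Sokal requires.

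Suppose the densities $\gamma_e=1-r_e$ do \emph{not} force $H$ as a transversal, so that some blow-up $G[H]$ with $d(A_i,A_j)\geq\gamma_{ij}$ avoids $H$. After rational approximation and a uniform blow-up I may assume every cluster has the same size $N$. Now sample one vertex uniformly and independently from each cluster. For $e=ij\in E(H)$ let $A_e$ be the bad event that the selected pair $(u_i,u_j)\in A_i\times A_j$ is \emph{not} an edge of $G[H]$. Then $\mathbb{P}(A_e)=1-d(A_i,A_j)\leq r_e$, and $A_e$ depends only on the two random choices at the endpoints of $e$, so the dependency graph of $\{A_e\}_{e\in E(H)}$ is contained in $L_H$.

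To match Theorem~\ref{Ath1}, which is stated with equalities $\mathbb{P}(A_i)=p_i$, I would inflate each $A_e$ to $A'_e:=A_e\cup\{Y_e=1\}$ using a fresh independent Bernoulli $Y_e$ whose parameter is tuned so that $\mathbb{P}(A'_e)=r_e$. The extra coins $Y_e$ are independent of everything already in the model, so the dependency graph of $\{A'_e\}_{e\in E(H)}$ is still contained in $L_H$. Theorem~\ref{Ath1}(a) then yields
$$\mathbb{P}\Bigl(\bigcap_{e\in E(H)}\overline{A'_e}\Bigr)\geq I((L_H,\underline{r_e}),1)=F_H(\underline{r_e},1)>0,$$
and since $A_e\subseteq A'_e$ we also have $\mathbb{P}(\bigcap_e\overline{A_e})>0$. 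With positive probability the chosen vertices realize every edge of $H$, so $G[H]$ contains $H$ as a transversal, contradicting our assumption.

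The only delicate point is bridging the gap between the natural bound $\mathbb{P}(A_e)\leq r_e$ coming from the construction and the equality $\mathbb{P}(A'_e)=r_e$ demanded by the verbatim statement of Scott--Sokal; the inflation trick with independent Bernoullis handles this cleanly, and as a bonus does not disturb the dependency graph. An alternative route is a monotonicity lemma showing that the Scott--Sokal positivity condition $I((L_H,\underline{p}),t)>0$ on $[0,1]$ is preserved under coordinate-wise decrease of the weights $\underline{p}$; everything else is a formal translation between the probabilistic setup and the polynomial hypothesis.
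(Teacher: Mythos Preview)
Your proof is correct and follows the same strategy as the paper: sample one vertex per cluster independently and apply Scott--Sokal (Theorem~\ref{Ath1}(a)) to the bad events, using that their weighted independence polynomial on the dependency graph is $I((L_H,\underline{r_e}),t)=F_H(\underline{r_e},t)$. The only organizational difference is that the paper introduces one bad event $A_f$ for each edge $f$ of the complement $\overline{G[H]|H}$ and then contracts the clique of such events lying between a fixed pair of clusters into a single vertex of weight $r_{ij}$, thereby arriving at your events $A_e$ on $L_H$; your version skips this detour by indexing events by $E(H)$ from the outset, and your Bernoulli inflation makes explicit the passage from $\mathbb{P}(A_e)\le r_e$ to $\mathbb{P}(A'_e)=r_e$, which the paper leaves implicit.
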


\begin{proof} As before, we choose a vertex from each cluster independently
  of each other. We choose the vertex $u$ from the cluster $V_i$ of the graph
  $G[H]$ with probability $w(u)$. We would like to show that we do not choose
  both endvertices  of an edge of the complement $\overline{G[H]|H}$ with
  positive probability.  Let $f=(u_1,u_2)$ be an edge of
the $\overline{G[H]|H}$. Let $A_f$ be the event
that we have chosen both endnodes of the edge $f$ (clearly, a bad event we would
like to avoid). Then $\mathbb{P}(A_f)=w(u_1)w(u_2)$ and $A_f$ is independent
from all events $A_{f'}$ where the edge $f'$ has endvertices in different
classes. Now let us consider the weighted independence polynomial of the graph
determined by the vertices $A_f$ in which we connect $A_f$ and $A_{f'}$ if there
exists a cluster containing endvertices of both $f$ and $f'$. In this graph,
the events $A_f$ where $f$ 
goes between the fixed clusters $V_i,V_j$  not only form a clique but it is also
true that they are connected to the same set of events. Hence we can replace
them by one vertex of weight 
$$\sum_{(u_1,u_2)\in E(\overline{G[H]}(V_i\cup V_j))}w(u_1)w(u_2)=r_{ij}$$
without changing the weighted independence polynomial. But then the obtained
weighted independence polynomial is
$$I((L_H,\underline{r_e}),t)=F_{H}(\underline{r_e},t)>0$$
for $t\in [0,1]$. Then by the Scott-Sokal theorem we have
$$\mathbb{P}(\cap_{f\in E(\overline{G[H]|H})} \overline{A_f})\geq
F((H,\underline{r_e}),1)>0.$$
\end{proof}

\begin{cor} Let $\Delta$ be the largest degree of the graph $H$ and $t(H)$ be
  the largest root of the matching polynomial. Then for the
  critical edge density $d_{crit}(H)$ we have 
$$d_{crit}(H)\leq 1-\frac{1}{t(H)^2}.$$
In particular,
$$d_{crit}(H)<1-\frac{1}{4(\Delta-1)}.$$
\end{cor}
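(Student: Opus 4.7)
\bigskip

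\textbf{Proof proposal.} The plan is to deduce the corollary directly from Theorem~\ref{C1} together with the Heilmann--Lieb bound on the largest root of the matching polynomial. I will put constant weights $r_e = r$ on every edge, translate the positivity condition on the multivariate matching polynomial into positivity of the univariate matching polynomial $M(H,s)$, and then optimise the choice of $r$.

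First I would unpack the definitions. With $r_e = r$ for every $e$, the multivariate matching polynomial becomes
\begin{equation*}
F_H(\underline{r},t) = \sum_{M \in \mathcal{M}} r^{|M|}(-t)^{|M|} = \sum_{M}(-rt)^{|M|} = F_H(\underline{1},rt).
\end{equation*}
Using the identity $t^n F_H(\underline{x_e},1/t^2) = M((H,\underline{x_e}),t)$ recorded just before Lemma~\ref{char-match}, this gives
\begin{equation*}
F_H(\underline{r},t) = (rt)^{n/2}\, M\!\left(H,\tfrac{1}{\sqrt{rt}}\right)
\end{equation*}
for $rt>0$. Since $M(H,s)>0$ for every real $s$ strictly greater than the largest root $t(H)$, the condition $F_H(\underline{r},t)>0$ for all $t\in(0,1]$ is equivalent to $1/\sqrt{r} > t(H)$, i.e.\ $r < 1/t(H)^2$. (At $t=0$ we have $F_H(\underline{r},0)=1>0$ trivially, so continuity handles the endpoint.)

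Next I would invoke Theorem~\ref{C1}: for any $r<1/t(H)^2$, the uniform densities $\gamma_e = 1-r$ already force an $H$-transversal in every blow-up graph. Letting $r \to 1/t(H)^2$ from below therefore yields
\begin{equation*}
d_{crit}(H) \le 1 - \frac{1}{t(H)^2},
\end{equation*}
which is the first assertion of the corollary.

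For the second, sharper estimate I would appeal to the classical Heilmann--Lieb theorem, which states that the matching polynomial of any graph of maximum degree $\Delta$ has all of its roots in the open interval $(-2\sqrt{\Delta-1},\,2\sqrt{\Delta-1})$. In particular $t(H) < 2\sqrt{\Delta-1}$, so $1/t(H)^2 > 1/(4(\Delta-1))$, and combining with the first inequality gives
\begin{equation*}
d_{crit}(H) < 1 - \frac{1}{4(\Delta-1)},
\end{equation*}
as required. The main conceptual step is the translation $F_H(\underline{r},t) \leftrightarrow M(H,1/\sqrt{rt})$, which reduces a statement about a multivariate polynomial on a cube to one about the location of the largest real root of $M(H,\cdot)$; the rest is bookkeeping plus quoting Heilmann--Lieb.
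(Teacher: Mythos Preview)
Your proof is correct and follows essentially the same route as the paper: set all $r_e=r$, rewrite $F_H(\underline{r},t)=(rt)^{n/2}M(H,1/\sqrt{rt})$, use positivity of $M(H,s)$ for $s>t(H)$ to feed Theorem~\ref{C1}, and then invoke the Heilmann--Lieb bound $t(H)<2\sqrt{\Delta-1}$. The only difference is cosmetic---you spell out the $t=0$ endpoint and the equivalence more explicitly than the paper does.
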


\begin{proof} Let $\gamma_e=1-r$ for every edge $e\in E(H)$, where
  $r<\frac{1}{t(H)^2}$ then 
$$F_H(\underline{r},t)=\sum_{k=0}^n(-1)^km_k(H)r^kt^k=(rt)^{n/2}M(H,\frac{1}{\sqrt{rt}})>(rt)^{n/2}M(H,t(H))=0$$ 
for $t\in [0,1]$.  Hence the set of densities $\{\gamma_e\}$ ensures the
existence of the graph $H$. Thus $d_{crit}(H)\leq 1-r$ for every
$r<\frac{1}{t(H)^2}$. Hence 
$$d_{crit}(H)\leq 1-\frac{1}{t(H)^2}.$$

The second claim follows from the fact that $t(H)< 2\sqrt{\Delta-1}$, see \cite{lieb}.
\end{proof}

\section{Star decomposition}

In this section we examine a large class of blow-up graphs which do not
induce a given graph as a transversal. Assume that $H=H_1\cup \{v_n\}$ and we
have a blow-up graph of $H_1$ which does not induce $H_1$ as a transversal.
We can construct a blow-up graph of $H$ not inducing $H$ as follows. Let
$A_n=\{w_n\}$ be the blow-up of $v_n$. Furthermore, assume that
$N_H(v_n)=\{v_1,v_2,\dots ,v_k\}$ with the corresponding clusters
$A'_1,\dots ,A'_k$ in the blow-up of $H_1$. Then let $A_i=A'_i\cup \{w_i\}$
if $1\leq i\leq k$ and we leave unchanged all other clusters. Let us connect
$w_n$ to each elements of $A'_i$ $(1\leq k\leq n)$ and connect $w_i$ with
every possible neighbor except $w_n$. All other pairs of vertices remain
adjacent or non-adjacent as in the blow-up of $H_1$. 

Now it is clear why we call this construction a star decomposition: 
the complement of the construction with respect to $G[H]$ consists of stars,
see Figure 4.

\begin{figure}[h!]
\begin{center}
\scalebox{.55}{\includegraphics{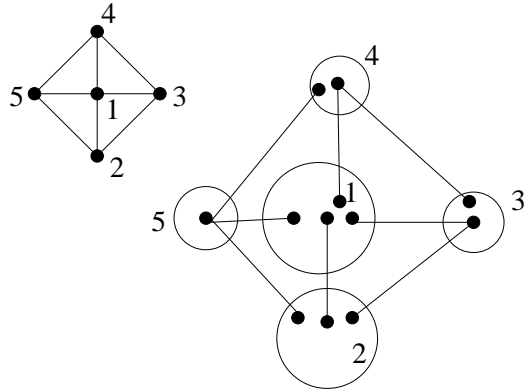}} \caption{Star decomposition
  of the wheel, the complement of the construction.} 
\end{center}
\end{figure}

This new blow-up graph will clearly not induce $H$ as a transversal.

Although we gave a construction of a blow-up of the graph $H$ not inducing
$H$, this is only the half of a real construction since we can vary the weights
of the vertices of the blow-up graph. Of course, we would like to choose the
weights optimally. But what does this mean? Assume that we are given densities
for all edges of $H$ and we wish to make a construction iteratively as we
described in the previous paragraph and now we would like to choose the
weights so that the edge-densities are at least as large as the required
edge-densities. To quantify this argument we need some definitions.
\bigskip

\begin{defn} \label{SD1} A proper labeling of the vertices of the graph $H$ is
  a bijective function $f$ from $\{1,2,\dots ,n\}$ to the set of vertices such
  that the vertex set $\{f(1),\dots ,f(k)\}$ induces a connected subgraph of
  $H$ for all $1\leq k\leq n$.
\end{defn}

\begin{defn} \label{SD2} Given a weighted graph $H$ with a proper labeling $f$,
  where  the weights on the edges are between $0$ and $1$. The \textit{weighted
    monotone-path tree} of $H$ is defined as follows. The vertices of this
  graph  are the paths of the form $f(i_1)f(i_2)\dots f(i_k)$ where
  $1=i_1<i_2<\dots <i_k$ and two such paths are connected if one is the
  extension of the other with exactly one new vertex. The weight of the edge
  connecting 
  $f(i_1)f(i_2)\dots f(i_{k-1})$ and $f(i_1)f(i_2)\dots f(i_k)$ is the weight
  of the edge $f(i_{k-1})f(i_k)$ in the graph $H$.

The monotone-path tree is the same without weights.
\end{defn} 

 \begin{figure}[h!] \label{mpt2}
\begin{center}
\scalebox{.65}{\includegraphics{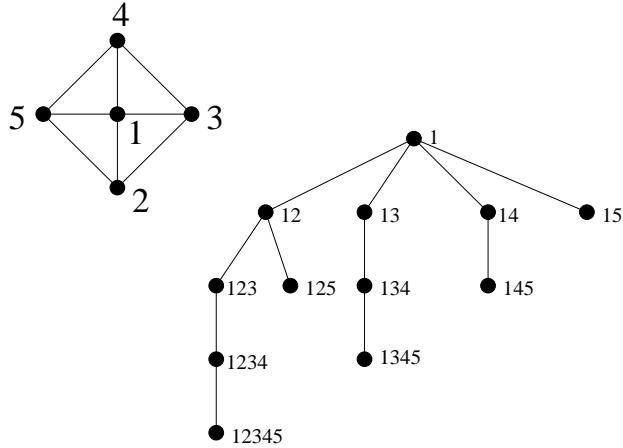}}\caption{A
  monotone-path tree of the wheel on $5$ vertices.}   
\end{center}
\end{figure}

\begin{theorem} \label{ST1} Let $H$ be  a properly labeled graph with edge
  densities $\gamma_e$ and 
  let $T_f(H)$ be its weighted monotone-path tree with weights
  $\gamma_e$. Assume that   these densities do not ensure the existence of the
  factor $T_f(H)$. Then there is a construction of a blow-up graph of $H$ not
  inducing $H$ as a transversal and all  densities between the clusters are at
  least as large as the given densities.
\end{theorem}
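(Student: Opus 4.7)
The plan is induction on $n=|V(H)|$. Cases $n\le 2$ are immediate: for $n=1$ the hypothesis is vacuous, while for $n=2$ it forces $\gamma\le 0$ on the single edge of $H$, so the blow-up with no edges between the two clusters works. For $n\ge 3$ I peel off $v_n:=f(n)$ and set $H':=H-v_n$ with the restricted labeling $f'$, which remains a proper labeling of $H'$ because every prefix $\{f(1),\dots,f(k)\}$ with $k\le n-1$ stays connected. By construction, $T_f(H)$ arises from $T_{f'}(H')$ by adjoining, for every monotone path $p$ of $H'$ whose endpoint $v_j$ lies in $N(v_n)$, a new leaf $p\cdot v_n$ joined to $p$ by an edge of weight $\gamma_{j,n}$.

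I then run Algorithm~\ref{tree-alg} on $T_f(H)$ to peel off all these $v_n$-leaves one at a time, arriving at $T_{f'}(H')$ with new edge-deficits $\widetilde r_e$. The key observation is that the new deficit depends only on the underlying edge $v_jv_k$ of $H'$: by Theorem~\ref{TT1}, removing the leaf attached at a path whose endpoint is $v_\ell\in N(v_n)$ rescales the deficit of every incident edge of $T_{f'}(H')$ by the common factor $1/(1-r_{\ell,n})$, and since successive such rescalings commute, I obtain
\[
\widetilde r_{j,k}=\frac{r_{j,k}}{\prod_{\ell\in\{j,k\}\cap N(v_n)}(1-r_{\ell,n})}.
\]
By iterating Theorem~\ref{TT1}, the assumption that the $\gamma_e$'s do not ensure a factor $T_f(H)$ is equivalent to the assumption that the modified densities $\widetilde\gamma_e=1-\widetilde r_e$ on $T_{f'}(H')$ do not ensure a factor $T_{f'}(H')$.

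Applying the inductive hypothesis to $H'$ with the labeling $f'$ and densities $\widetilde\gamma_e$, I obtain a weighted blow-up $G'[H']$ of edge densities at least $\widetilde\gamma_e$ containing no transversal copy of $H'$. I extend it to $G[H]$ via a single star-decomposition step, as described at the beginning of this section: I let $A_n=\{w_n\}$ carry weight $1$ and, for every $v_i\in N(v_n)$, enlarge $A_i$ by a new vertex $w_i$ of weight $\alpha_i:=r_{i,n}$, rescaling the old weights in $A_i$ by the factor $1-\alpha_i$. The recursive star-decomposition argument already given in the section guarantees that $G[H]$ has no transversal copy of $H$: any candidate transversal is forced to take $w_n$ from $A_n$, which excludes every new $w_i$ from being selected, so the problem reduces to finding a transversal of $H'$ in $G'[H']$, which does not exist.

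The heart of the argument is the density check. For an edge $v_iv_n$ one reads $d(A_i,A_n)=1-\alpha_i=\gamma_{i,n}$ directly. For an edge $v_{i'}v_{j'}\in E(H')$, a direct computation of a single star step gives
\[
1-d(A_{i'},A_{j'})=\Bigl(\prod_{\ell\in\{i',j'\}\cap N(v_n)}(1-\alpha_\ell)\Bigr)\bigl(1-d'(A'_{i'},A'_{j'})\bigr),
\]
and combining this with the inductive bound $1-d'(A'_{i'},A'_{j'})\le\widetilde r_{i',j'}$ and the defining formula for $\widetilde r_{i',j'}$ yields $1-d(A_{i'},A_{j'})\le r_{i',j'}$, i.e.\ $d(A_{i'},A_{j'})\ge\gamma_{i',j'}$. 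This cancellation, the leaf-peeling factors built into $\widetilde r_{i',j'}$ being matched exactly by the scaling factors from the star step, is the technical core of the proof, and it works precisely because both modifications depend only on which endpoints of the $H'$-edge lie in $N(v_n)$. The main obstacle I anticipate is the degenerate case in which Algorithm~\ref{tree-alg} halts with some $\widetilde r_e\ge 1$; this should be handled exactly as in the ``not ensuring'' half of the proof of Theorem~\ref{TT1}, where the algorithm's failure pinpoints a two-edge sub-path of $T_f(H)$ violating the easy two-edge transversal condition, giving a bad edge of $H'$ on which it suffices to set $d'=0$ in the inductive construction.
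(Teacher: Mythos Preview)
Your proposal is correct and follows essentially the same route as the paper: induction on $|V(H)|$, peeling off $f(n)$ so that $T_f(H)$ becomes $T_{f'}(H')$ with all $v_n$-leaves attached, applying Theorem~\ref{TT1} to obtain the modified deficits $\widetilde r_{j,k}=r_{j,k}/\prod_{\ell\in\{j,k\}\cap N(v_n)}(1-r_{\ell,n})$, invoking the inductive hypothesis on $H'$, and then undoing this rescaling with a single star-decomposition step. Your write-up is in fact more explicit than the paper's about the key cancellation in the density check and about the degenerate case $\widetilde r_e\ge 1$, but the underlying argument is the same.
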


\begin{remark} So this theorem provides a necessary condition for the densities
  ensuring the existence of factor $H$. In fact, this gives as many necessary
  conditions as many proper labelings the graph $H$ has. The advantage of this
  theorem is that we already know the case of trees substantially.
\end{remark}

\begin{proof} We prove the statement by induction on the number of vertices of
  $H$. For $n=1,2$ the claim is trivial since $H=T_f(H)$. Now assume that we
  already know the statement till $n-1$ and we need to prove it for $|V(H)|=n$. 

  We know from Theorem~\ref{TT1} that $\gamma_e$ ensure the existence of factor
  $T=T_f(H)$ if the corresponding $\gamma'_e$  ensure the existence of factor
  $T'$. Let us apply this theorem as follows. We delete all vertices
  (monotone-paths) of $T_f(H)$ which contains the vertex $f(n)$. The remaining
  tree will be a weighted path tree of $H_1=H-\{f(n)\}$ where the new labeling
  is simply the restriction of $f$ to the set $\{1,2,\dots ,n-1\}$. (We will
  denote this restriction by $f$ as well.) By  induction
  there exists a blow-up graph of $H_1$ not inducing $H_1$ as a transversal
  and all densities between the clusters are at least $\gamma_e(T_f(H_1))$
  where we can also assume that the total weight of each cluster is $1$. 

Now we can do the the construction described in the beginning of this section.  
Let $f(n)=u$ and $N_H(u)=\{u_1,\dots ,u_k\}$. Let the weight of the new vertex
 $w_i\in A_i$ be $(1-\gamma_{uu_i})$ and the weights of the other vertices of the
 cluster be $\gamma_{uu_i}$ times the original one. Clearly, between the
 clusters $A_n$ and $A_i$ $(1\leq i\leq k)$, the weight  is just
 $\gamma_{uu_i}$ as  required. 
What about the other densities? First of all let us examine $\gamma'_e$'s.
Let us consider the adjacent vertices $f(1)\dots f(i)$ and $f(1)\dots
f(i)f(j)$ of $T_f(H_1)$. If both $f(i),f(j)\in N_H(u)$ then we deleted the
vertices $f(1)\dots f(i)f(n)$ and $f(1)\dots f(i)f(j)f(n)$ from $T_f(H)$
changing $\gamma_e=1-r_e$ to
$1-\frac{r_e}{\gamma_{f(n)f(i)}\gamma_{f(n)f(j)}}$. If 
only one of the vertices  $f(i)$ or $f(j)$ was connected to $f(n)$ then we
can still easily follow the change:
$\gamma'_e=1-\frac{r_e}{\gamma_{f(n)f(i)}}$ if $f(i)$ was connected to
$f(n)$. If none of them was connected to $f(n)$ then there is no 
change. But in all cases we do exactly the inverse of this operation at the
blow-up graphs ensuring that the new densities are at least $\gamma_e$.
\end{proof}

\begin{cor} \label{SCT1} Let $S(H)$ be the set of proper labelings of the
  graph $H$. The critical density of the graph $H$ is at least  
$$\max_{f\in S(H)}\left\{1-\frac{1}{\lambda(T_f(H))^2}\right\}.$$ 
\end{cor}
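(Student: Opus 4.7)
The plan is to apply Theorem~\ref{ST1} in combination with the tree corollary characterising $d_{crit}(T) = 1 - 1/\lambda(T)^2$ proved earlier in Section~3. Fix a proper labeling $f \in S(H)$, and set $\lambda := \lambda(T_f(H))$. My goal is to produce, for every $\gamma < 1 - 1/\lambda^2$, a blow-up of $H$ with all edge-densities at least $\gamma$ that does not contain $H$ as a transversal; this will force $d_{crit}(H) \geq 1 - 1/\lambda^2$, and maximising over $f \in S(H)$ yields the claim.

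First I would assign the uniform density $\gamma_e = 1 - 1/\lambda^2$ to each edge $e \in E(H)$. By Definition~\ref{SD2}, the weighted monotone-path tree $T_f(H)$ then inherits the constant edge-weight $\gamma_e = 1-1/\lambda^2$ on every one of its edges, since its edge weights are just those of $H$ pulled back along the path-extension map. Hence the situation on $T_f(H)$ is exactly the homogeneous one.

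Next I would invoke the tree corollary from Section~3, which says that at the homogeneous density $1-1/\lambda(T)^2$ there is a weighted blow-up of any tree $T$ not containing $T$ as a transversal. Applied to $T = T_f(H)$, this tells us that the chosen densities \emph{do not} ensure the factor $T_f(H)$. Theorem~\ref{ST1} now directly produces a blow-up graph of $H$ with every edge-density at least $\gamma_e = 1 - 1/\lambda(T_f(H))^2$ that does not induce $H$ as a transversal. By the definition of critical density (and the compactness remark following Theorem~\ref{ZolTh1}, which guarantees attainment at the boundary), this gives $d_{crit}(H) \geq 1 - 1/\lambda(T_f(H))^2$. Since $f \in S(H)$ was arbitrary, we may take the maximum over all proper labelings.

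The main obstacle is really just making sure the equality case of the tree bound is legitimate for invoking Theorem~\ref{ST1}; but Theorem~\ref{ST1} only requires that the densities fail to \emph{ensure} the factor $T_f(H)$, which holds at the critical density because the critical density is attained by an actual non-containing construction. No routine case analysis or additional calculation is needed beyond citing Theorem~\ref{ST1}, Definition~\ref{SD2}, and the already-established value $d_{crit}(T) = 1 - 1/\lambda(T)^2$ for trees.
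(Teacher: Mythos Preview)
Your argument is correct and is exactly the route the paper (implicitly) takes: the corollary is stated immediately after Theorem~\ref{ST1} without a separate proof, precisely because it follows at once from Theorem~\ref{ST1} together with the tree result $d_{crit}(T)=1-1/\lambda(T)^2$ from Section~3. Your handling of the equality case is also right---Theorem~\ref{ST1} only needs that the densities fail to ensure $T_f(H)$, and the Section~3 corollary provides a non-containing blow-up of $T_f(H)$ at exactly $1-1/\lambda(T_f(H))^2$.
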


\begin{remark} If each edge density is equal to
  $1-\frac{1}{\lambda(T_f(H))^2}$ then there is a straightforward connection
  between the weights of the constructed blow-up graph and the eigenvector of
  the tree $T_f(H)$ belonging to the eigenvalue $\lambda(T_f(H))$. This
  connection is very similar to the one given by Andr\'as G\'acs.
\end{remark}

\subsection{The Main conjecture and  a counterexample}\ \ 
\bigskip

The following conjecture seems a natural one after the case of trees.

\begin{conj}[General Star Decomposition Conjecture] \label{SC1} Let $H$ be  a
  graph with edge densities 
  $\gamma_e$. Assume that for each proper labeling $f$ the weights 
 as densities  of the weighted monotone-path tree ensure the existence of the
 graph $T_f(H)$. Then the given densities ensure the existence of the graph $H$.
\end{conj}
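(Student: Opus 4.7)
The cleanest first attack goes via Theorem~\ref{C1}: it suffices to show that $F_H(\underline{r_e},t) > 0$ for all $t\in[0,1]$, where $r_e=1-\gamma_e$. By Theorem~\ref{TT2}, the hypothesis already gives $F_{T_f(H)}(\underline{r_e},t) > 0$ on $[0,1]$ for every proper labeling $f$, so the entire question reduces to comparing the multivariate matching polynomials of $H$ and of its monotone-path trees. The most tempting route is a Godsil-type identity: in the unweighted case, $M(G,t)$ divides $M(T(G,v),t)$ where $T(G,v)$ is Godsil's full path tree. One would try to prove an analogous weighted identity
$$F_H(\underline{r_e},t)\cdot F_{T_f(H)-\rho}(\underline{r_e},t) \;=\; F_{T_f(H)}(\underline{r_e},t)\cdot F_{H-f(1)}(\underline{r_e},t),$$
where $\rho$ is the root of $T_f(H)$, which would immediately upgrade positivity of $F_{T_f(H)}$ to positivity of $F_H$. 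A direct check on $H=K_3$, however, shows that this identity fails verbatim for the monotone-path tree (the two sides differ by a constant term). So a genuinely new identity, or an appropriate symmetric averaging over all $n!$ proper labelings, would be needed; working out this polynomial identity is where I would start.

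In parallel I would run a combinatorial induction on $|V(H)|$. The base $|V(H)|\le 2$ is trivial since $H=T_f(H)$. For the step, pick a non-cut vertex $v$, so $H-v$ is connected. For any proper labeling $g$ of $H-v$, extending by $f(n)=v$ gives a proper labeling $f$ of $H$; the tree $T_f(H)$ is then obtained from $T_g(H-v)$ by attaching as pendant leaves all the monotone paths that end in $v$. Repeated application of Theorem~\ref{TT1} to these pendant leaves converts ``the $\gamma_e$'s ensure $T_f(H)$'' into ``some modified $\gamma'_e$'s ensure $T_g(H-v)$''. Since the modification only decreases densities and the ``ensures'' relation is monotone, the \emph{original} $\gamma_e$'s already ensure $T_g(H-v)$. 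Running this for every $g$, the inductive hypothesis applies to $H-v$, and for any blow-up $G[H]$ the subgraph $G[H]\setminus A_v$ contains $H-v$ as a transversal.

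The real obstacle is the extension step: given a transversal $(u_1,\dots,u_{n-1})$ of $H-v$ in $G[H]\setminus A_v$, produce a vertex $u_n\in A_v$ adjacent to every $u_i$ with $i\in N_H(v)$. A single transversal need not extend, so one must produce a \emph{compatible} one. The natural fixes---a Hall-type argument on the auxiliary bipartite graph between $(H-v)$-transversals and $A_v$, a counting/entropy bound, or invoking the hypothesis also for labelings in which $v$ occurs earlier---each put delicate demands on how the $\gamma_e$'s interact across the cycles of $H$. This is the step I expect to break: the subsection heading already announces a counterexample, and I suspect it witnesses precisely this failure, namely a small graph (probably a short wheel, prism, book, or two triangles sharing a vertex) whose every monotone-path tree $T_f(H)$ barely satisfies the tree condition while no compatible choice in $A_v$ exists. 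If neither the Godsil-style identity nor the extension survives, the most productive next step would be to search such graphs directly for a $T_f(H)$ threshold strictly below $d_{crit}(H)$, settling the conjecture in the negative.
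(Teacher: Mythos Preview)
The statement you are attempting to prove is a conjecture that the paper immediately goes on to \emph{disprove}; there is no proof, and none can exist. Your instinct in the final paragraph is exactly right, down to the guess ``two triangles sharing a vertex'': the paper's counterexample is precisely the bow-tie, with edge densities $0{.}85$ on the four edges incident to the degree-$4$ vertex and $0{.}51$ on the two outer edges. The paper (i) exhibits an explicit weighted blow-up at these densities containing no transversal; (ii) uses the gluing Lemma~\ref{dens_glue} at the cut vertex together with the triangle criterion of Lemma~\ref{triangle1} to show that if any single density is strictly increased then the bow-tie is forced, so the construction is sharp; and (iii) checks by direct computation that neither of the (up to symmetry) two star decompositions of the bow-tie can match these densities. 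Via the contrapositive of Theorem~\ref{ST1}, step (iii) says exactly that the $\gamma_e$'s ensure $T_f(H)$ for every proper labeling $f$, while by (i) they do not ensure $H$.

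Your two aborted routes therefore fail for the correct reason, not merely for lack of cleverness. The weighted Godsil-type divisibility you seek genuinely does not hold---your $K_3$ check already shows this---and no averaging over labelings can rescue it, since the bow-tie supplies densities for which every $F_{T_f(H)}(\underline{r_e},t)$ is positive on $[0,1]$ while $H$ is still not forced. Likewise the inductive extension step breaks in exactly the way you anticipated: in the bow-tie blow-up, every choice from the central cluster kills one of the two triangles. The productive continuation you suggest at the end---searching small graphs directly---is precisely what the paper does, and it succeeds with the bow-tie.
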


The following conjecture states that the bound on the critical edge density
coming from Corollary \ref{SCT1} is sharp.

\begin{conj}[Uniform Star Decomposition Conjecture] \label{SC2}
 Let $S(H)$ be the set of proper labelings of the
  graph $H$.  The critical density of the graph $H$ satisfies 
$$d_{crit}=\max_{f\in S(H)}\left\{1-\frac{1}{\lambda(T_f(H))^2}\right\}.$$ 
\end{conj}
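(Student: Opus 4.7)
The plan is to establish equality by matching the lower bound already given by Corollary~\ref{SCT1} with a new upper bound. The star decomposition construction in Section~5 produces, for each proper labeling $f$, a weighted blow-up graph of $H$ with uniform edge density $1-1/\lambda(T_f(H))^2$ and no transversal $H$, so we already have $d_{crit}(H)\geq \max_f\{1-1/\lambda(T_f(H))^2\}$. All the content is in the reverse inequality.

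My plan is to deduce the upper bound from the General Star Decomposition Conjecture~\ref{SC1}. Given uniform edge densities $\gamma_e=\gamma$ with $\gamma>\max_f\{1-1/\lambda(T_f(H))^2\}$, each edge of $T_f(H)$ carries the density of some edge of $H$ and hence also the uniform value $\gamma$. Thus $\gamma>1-1/\lambda(T_f(H))^2$ for every $f$, and by the tree case (Theorem~\ref{TT2} and its corollary) the density condition on each $T_f(H)$ already guarantees $T_f(H)$ as a transversal. Assuming Conjecture~\ref{SC1}, this forces a transversal $H$ in every blow-up with the prescribed densities, giving $d_{crit}(H)\leq \max_f\{1-1/\lambda(T_f(H))^2\}$ and hence equality.

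To prove Conjecture~\ref{SC1}, the natural route is Theorem~\ref{C1}: writing $r_e=1-\gamma_e$, it suffices to verify that $F_H(\underline{r_e},t)>0$ on $[0,1]$, knowing by Theorem~\ref{TT2} that $F_{T_f(H)}(\underline{r_e},t)>0$ on $[0,1]$ for every proper labeling $f$. Since both polynomials equal $1$ at $t=0$, this is a statement about the location of real roots, and the lemma I would aim for is a multivariate, monotone-path analog of Godsil's theorem: every real root of $F_H(\underline{r_e},\cdot)$ is a real root of $F_{T_f(H)}(\underline{r_e},\cdot)$ for some $f\in S(H)$. In the unweighted case Godsil's path-tree identity
$$\frac{M(H,x)}{M(H-v,x)}=\frac{M(P(H,v),x)}{M(P(H,v)-v,x)}$$
yields the divisibility $M(H,x)\mid M(P(H,v),x)$, and the plan is to import this bookkeeping to the monotone restriction by deleting vertices in the order dictated by $f$, namely $f(n),f(n-1),\ldots$, and matching each step of the recursion against the peeling operation of Theorem~\ref{TT1}.

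The main obstacle, which is presumably why Conjectures~\ref{SC1} and~\ref{SC2} are left open, is exactly that $T_f(H)$ is a proper subtree of the full non-backtracking path tree $P(H,v)$: it retains only strictly increasing-label paths, and Godsil's continued-fraction recursion does not cleanly restrict to it. Peeling off $f(n)$ from $H$ introduces non-monotone walks on the path-tree side that have no counterpart in $T_f(H)$, and these must either be absorbed by an inhomogeneous tracking of densities through Theorem~\ref{TT1} or traded off by varying $f$. A successful execution would either produce a clean divisibility $F_H\mid F_{T_f(H)}$ for a cleverly chosen $f$ (possibly depending on $\underline{r_e}$), or at least show containment of root sets $\mathrm{roots}(F_H)\subseteq\bigcup_{f}\mathrm{roots}(F_{T_f(H)})$, either of which is sufficient. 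A parallel, more combinatorial line of attack would try to prove Conjecture~\ref{SC1} by induction on $|V(H)|$, mirroring the inductive construction in Theorem~\ref{ST1}: peel off the last vertex $f(n)$, apply the inductive hypothesis to $H_1=H\setminus\{f(n)\}$, and reattach; the same obstruction reappears here as the need to verify that the inhomogeneous densities $\gamma'_e$ produced by Theorem~\ref{TT1} continue to satisfy the monotone-path-tree hypothesis for $H_1$, so either approach ultimately confronts the same core algebraic comparison between $F_H$ and $F_{T_f(H)}$.
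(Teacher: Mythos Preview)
The statement you are trying to prove is labeled as a \emph{conjecture} in the paper and is not proved there; in fact, immediately after stating it the authors produce a counterexample to Conjecture~\ref{SC1} (the weighted bow-tie in Counterexample~\ref{bow-tie1}) and explicitly write that ``it seems very unlikely that the Uniform Star Decomposition Conjecture is true.'' So there is no proof in the paper to compare against, and the correct target here is not a proof but an assessment of whether your proposed route could succeed.

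Your plan has a fatal gap: you propose to derive Conjecture~\ref{SC2} from Conjecture~\ref{SC1}, and then to prove Conjecture~\ref{SC1} via a multivariate Godsil-type divisibility or root-containment statement between $F_H$ and $F_{T_f(H)}$. But Conjecture~\ref{SC1} is \emph{false}: the paper exhibits a weighted bow-tie whose edge densities are not achievable by any star decomposition (Proposition following Counterexample~\ref{bow-tie1}), yet the corresponding blow-up graph has no transversal. Consequently the lemma you aim for---that every real root of $F_H(\underline{r_e},\cdot)$ lies among the roots of some $F_{T_f(H)}(\underline{r_e},\cdot)$---cannot hold for general weights $\underline{r_e}$, since combined with Theorem~\ref{C1} and Theorem~\ref{TT2} it would imply Conjecture~\ref{SC1}. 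The obstruction you identify (that $T_f(H)$ is only a proper subtree of the full path tree and Godsil's recursion does not restrict to monotone paths) is not merely technical; the bow-tie shows it is genuine.

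If you want to attack Conjecture~\ref{SC2} itself, any viable argument must exploit the \emph{uniformity} of the densities in an essential way, since the inhomogeneous strengthening is already disproved. Your inductive peeling scheme immediately destroys uniformity (the $\gamma'_e$ of Theorem~\ref{TT1} are no longer equal), so that route also runs straight into the bow-tie obstruction. The paper offers no replacement strategy; it leaves Conjecture~\ref{SC2} open and doubtful in general, while suggesting it may hold for complete and complete bipartite graphs.
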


\begin{remark} So the General Star Decomposition Conjecture asserts that for
  every graph and every weighting (or edge densities) the best we can do is to
  choose a good order of the vertices and construct the ``stars''. The Uniform
  Star Decomposition Conjecture is clearly a special case of this conjecture
  when all edge densities are the same for every edge. 
\end{remark}

The General Star Decomposition Conjecture is true for the triangle in the
sense that for every weighting the star decomposition of a suitable labeling
gives the best construction or shows that there is no suitable blow-up graph;
this is a theorem of  Adrian Bondy, Jian Shen, St\'ephan Thomass\'e and Carsten
Thomassen, see Lemma \ref{triangle1} or \cite{bond}. As we have seen this
conjecture is also true for trees. We  show that it holds also for cycles.

\begin{theorem} \label{circle} 
General Star Decomposition Conjecture holds for $C_n$.
\end{theorem}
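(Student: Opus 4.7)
My plan is to argue by contrapositive: assume the densities $\gamma_e = 1-r_e$ do not ensure $C_n$, and produce a proper labeling $f$ such that they also do not ensure $T_f(C_n)$.

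First I would record the structural observation that $T_f(C_n)$ depends only on $v^\star := f(n)$. For a proper labeling of $C_n$, the labels strictly increase along each arc from $v_1$ out towards the final vertex (every step extends the current connected sub-arc by one new vertex, which receives the current highest label), so both legs of the monotone-path tree extend all the way to $v^\star$. Consequently $T_f(C_n)$ is always the path $P_{n+1}$ whose $n$ edges are the cycle edges of $C_n$ laid out in cyclic order with a "break" at $v^\star$. Thus the hypothesis reduces to $n$ tree conditions, one per vertex $v^\star \in V(C_n)$; write $T_{v^\star}$ for the corresponding tree.

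Now pick a weighted blow-up $G[C_n]$ with edge densities at least $\gamma_e$ and no $C_n$-transversal. By Theorem~\ref{ZolTh1} every cluster $A_{v_i}$ may be taken of size at most $2$. In the easy case, some $A_{v^\star}$ is a singleton: the natural blow-up of $T_{v^\star}$ obtained by duplicating this cluster has both duplicate copies as singletons, so every tree-transversal is forced to pick the unique vertex on both copies and hence projects to a $C_n$-transversal of $G[C_n]$, contradicting our assumption. This tree blow-up therefore has no transversal, witnessing the failure of the densities on $T_{v^\star}$, and we are done.

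The main case, and principal obstacle, is when every cluster $A_{v_i} = \{a_i, b_i\}$ has size exactly $2$. Encoding the bipartite graph between $A_{v_i}$ and $A_{v_{i+1}}$ by a $2 \times 2$ zero-one transition matrix $M_i$, the absence of a $C_n$-transversal becomes $\operatorname{tr}(M_1 \cdots M_n) = 0$; by cyclic invariance each cyclic product $W^{v^\star} := M_{v^\star} M_{v^\star+1} \cdots M_{v^\star-1}$ is a nonnegative $2\times 2$ matrix of vanishing trace, so either $W^{v^\star} = 0$ or $W^{v^\star}$ has the off-diagonal form $\bigl(\begin{smallmatrix}0 & p\\ q & 0\end{smallmatrix}\bigr)$ with $(p,q) \ne (0,0)$. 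The natural tree blow-up of $T_{v^\star}$ admits a tree-transversal precisely when $W^{v^\star} \ne 0$, so if $W^{v^\star} = 0$ for some $v^\star$ we are done. The genuine difficulty is when every $W^{v^\star}$ is off-diagonal nonzero; here every naive tree blow-up has transversals in which the two copies of $A_{v^\star}$ pick different vertices. My plan is then to exploit the reweighting flexibility of Theorem~\ref{ZolTh1}: assign weights $(\alpha, 1-\alpha)$ and $(\beta, 1-\beta)$ to the two duplicated copies of $A_{v^\star}$, with compensating adjustments elsewhere so that all edge densities remain at least $\gamma_e$, reducing the existence of a tree-transversal to a pair of bilinear inequalities in $(\alpha, \beta)$ driven by the off-diagonal entries $p^{v^\star}, q^{v^\star}$ of $W^{v^\star}$; one then argues that the global trace-zero constraint (together with the tree hypothesis at the other vertices) forces this pair to be infeasible for some choice of $v^\star$, producing the required bad weighted blow-up of $T_{v^\star}$. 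As a consistency check, the case $n = 3$ recovers Lemma~\ref{triangle1} directly: the three polynomials $F_{T_{v_i}(C_3)}(\underline{r}, 1) = 1 - r_1 - r_2 - r_3 + r_j r_k$ are precisely the three Bondy–Shen–Thomassé–Thomassen inequalities.
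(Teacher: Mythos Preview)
Your opening reductions are correct and match the paper's sketch: the weighted monotone-path tree $T_f(C_n)$ is indeed the path $P_{n+1}$ obtained by cutting the cycle at $v^\star=f(n)$, the reduction to cluster size at most $2$ via Theorem~\ref{ZolTh1} is the right first move, and your easy case (some cluster a singleton) is exactly the cycle--path correspondence the paper invokes. The sanity check against Lemma~\ref{triangle1} is also right.

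The gap is in your main case. Reweighting the two copies of $A_{v^\star}$ in the path blow-up cannot do what you want: whether a blow-up admits a transversal is a purely combinatorial property of its edge set, so no choice of $(\alpha,\beta)$ will destroy a transversal that is already present when $W^{v^\star}\neq 0$. Your phrase ``reducing the existence of a tree-transversal to a pair of bilinear inequalities'' therefore has no content as stated, and the ``compensating adjustments elsewhere'' and the appeal to ``the tree hypothesis at the other vertices'' remain bare assertions. Nothing in the outline explains how to manufacture a transversal-free blow-up of $T_{v^\star}$ with densities $\geq\gamma_e$ in the all-size-$2$, all-$W^{v^\star}$-nonzero situation.

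The paper does not attack this case head-on. Instead it argues (adapting Lemma~4.5 of \cite{nagy1} to the inhomogeneous setting) that among transversal-free blow-ups of $C_n$ with densities at least $\gamma_e$ and cluster sizes at most $2$, one can always find one in which some cluster has a single vertex --- a weight-sliding/extremal argument carried out on the \emph{cycle} blow-up, not on the tree blow-up. This collapses the main case back into your easy case. That reduction is the missing idea in your outline; the transfer-matrix framework you set up is compatible with it but does not by itself supply it.
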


We only sketch the proof here, since the inhomogeneous condition of edge
densities of $C_n$ is needed here, which may build up in a similar way to the
homogeneous case, described in the $4$th section of \cite{nagy1}. The details
will be left to the Reader. 

\begin{proof}[Sketch of the proof.]
Notice that a key statement in the proof of $d(C_n)=d(P_{n+1})$ \cite{nagy1}
was to make a correspondence between the constructions for $C_n$ and
$P_{n+1}$. In our terminology, this correspondence is exactly the one between
the cycle and its monotone-path tree, which is in fact a path on $n+1$ vertices.
\medskip 
 
Hence the proof can be build up, as follows.
First, by  applying Theorem \ref{ZolTh1} we can assume that each cluster has
size at most $2$. Then it  turns out that just like in the homogeneous case
\cite{nagy1} there is only one candidate for the edge-construction to give the
best construction with appropriate weighting. In fact, this edge construction
is exactly Construction $4.1$ in \cite{nagy1}.
 
Then slightly modifying Lemma $4.5$ in \cite{nagy1} we can obtain that one may
assume that one of the clusters has cardinality $1$, which provides the
correspondence of the construction of cycles and paths. 
In this case we have $n$ different paths depending on the starting vertex, and
these are exactly the monotone-path trees of the cycle. 
\end{proof}

However, in the following part we will show that the   
 General Star Decomposition Conjecture is in general false. Thus it seems
 very unlikely that the Uniform Star Decomposition Conjecture is true. Still it
 is a meaningful question to ask for which graphs one or both conjectures
 hold. The authors strongly believe that the Uniform Star Decomposition
Conjecture is true for complete graphs and complete bipartite graphs. 

\medskip

Our counterexample for the General Star Decomposition Conjecture is a weighted
bow-tie given by the following figure. 
It is not a star
decomposition in the sense we constructed it, while it is indeed a good
construction: whatever we choose from the middle 
cluster we cannot choose its neighbors (since it is the complement), but then
we have to choose the other vertices from the corresponding clusters, but they
are connected in the complement.\\

 \begin{figure}[h!] \label{bow-tie}
\begin{center}
\scalebox{.65}{\includegraphics{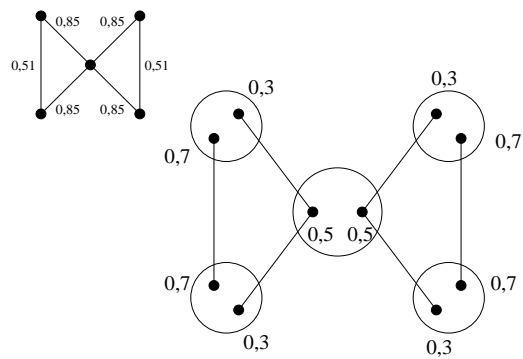}}\caption{Weighted bow-tie
  and its weighted blow-up graph of the complement.}   
\end{center}
\end{figure}

We will show that the given construction of the blow-up graph is the best
possible in the following sense. If for some blow-up graph the edge densities
are at least as large as the required densities and one of them is strictly
greater, then it induces the bow-tie as a transversal. We will also show that
no star decomposition can attain the same densities. 

Before we prove it we need some preparation. We prove a lemma which can be
considered as a generalization of Theorem~\ref{TT1}. 

\begin{lemma} \label{dens_glue} Let $H_1,H_2$ be two graphs and let $u_1\in
  V(H_1)$ and $u_2\in V(H_2)$.  
Let us denote by $H_1:H_2$ the graph obtained by identifying the vertices
$u_1,u_2$ in $H_1\cup H_2$. Let $0<m_1,m_2<1$ such that $m_1+m_2\leq 1$. 
Furthermore, assume that an  edge density $\gamma_e=1-r_e$ is assigned to
every edge. If the edge densities
$$\gamma'_e=\left\{ {\gamma_e=1-r_e \mbox{ if $e\in E(H_1)$ is not incident to
      $u_1$,}} 
    \atop    { 1-\frac{r_e}{m_1} \mbox{ if $e\in E(H_1)$ is incident
      to $u_1$,}} \right.$$
ensure the existence of a transversal $H_1$ and the edge densities
$$\gamma'_e=\left\{ {\gamma_e=1-r_e \mbox{ if $e\in E(H_2)$ is not incident to
      $u_2$,}} 
    \atop    { 1-\frac{r_e}{m_2} \mbox{ if $e\in E(H_2)$ is incident
      to $u_2$.} }\right.$$
ensure the existence of a transversal $H_2$,
then the edge densities $\{\gamma_e\}$ ensure the existence of a transversal
$H_1:H_2$.
\end{lemma}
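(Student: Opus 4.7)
The strategy is to generalise the restriction argument of Theorem~\ref{TT1} and run it on both sides simultaneously. Given a blow-up $G[H_1:H_2]$ with densities at least $\gamma_e$, write $A_0$ for the cluster at the identified vertex $u = u_1 = u_2$. I would first record the following elementary density-boosting estimate: for every subset (or, in the weighted framework, weighted sub-part) $R \subseteq A_0$ with $|R| = m_1|A_0|$ and every neighbor $u_j$ of $u$ in $H_1$,
$$d(R,A_j) \;\geq\; \frac{e(A_0,A_j)-|A_0\setminus R||A_j|}{|R||A_j|} \;\geq\; \frac{\gamma_{uu_j}-(1-m_1)}{m_1} \;=\; 1-\frac{r_{uu_j}}{m_1} \;=\; \gamma'_{uu_j}.$$
Hence replacing $A_0$ by $R$ yields a blow-up of $H_1$ meeting all the prescribed densities $\gamma'_e$ on $E(H_1)$, which by hypothesis contains $H_1$ as a transversal; the image of $u_1$ in such a transversal is a vertex of $R$.

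Using this, I would next produce many ``good'' vertices in $A_0$. Define
$$E_1 := \{v \in A_0 : v \text{ extends to a transversal of } H_1 \text{ in } G[H_1:H_2]\}$$
and $E_2$ analogously for $H_2$ with parameter $m_2$. If $|A_0 \setminus E_1| \geq m_1|A_0|$ one could pick $R \subseteq A_0 \setminus E_1$ of size $m_1|A_0|$, and the observation above would produce a vertex of $R$ lying in $E_1$ --- a contradiction. Hence $|E_1| > (1-m_1)|A_0|$, and symmetrically $|E_2| > (1-m_2)|A_0|$.

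The hypothesis $m_1+m_2 \leq 1$ now forces $|E_1| + |E_2| > |A_0|$, so some $v^* \in E_1 \cap E_2$ exists. By definition $v^*$ is the image of $u$ in some transversal $\varphi_1$ of $H_1$ and some transversal $\varphi_2$ of $H_2$ in $G[H_1:H_2]$; since $H_1:H_2$ is the union of $H_1$ and $H_2$ sharing only the vertex $u$, gluing $\varphi_1$ and $\varphi_2$ along $v^*$ gives a transversal of $H_1:H_2$, as required. The one point where care is needed is the handling of fractional sizes --- $m_1|A_0|$ need not be an integer and $R$ may have to be a weighted sub-part of $A_0$. The cleanest remedy is to invoke Theorem~\ref{ZolTh1} and work inside the weighted framework from the outset, where the estimate above reads $d(R,A_j) \geq 1 - r_{uu_j}/w(R) \geq \gamma'_{uu_j}$ as soon as $w(R) \geq m_1$, and the rest of the argument goes through verbatim with weights in place of cardinalities.
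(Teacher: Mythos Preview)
Your argument is correct and is essentially the same as the paper's: both define the sets of extendable vertices in the shared cluster, bound their weight from below by $1-m_i$ via the contradiction/density-boosting restriction, and then use $m_1+m_2\le 1$ to force a common vertex. The only cosmetic difference is that the paper works in the weighted framework throughout and restricts to all of $A_0\setminus R_1$ (weight $t\ge m_1$) rather than a sub-part of weight exactly $m_1$, which avoids the integrality caveat you flag at the end.
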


\begin{proof} Let $G[H_1:H_2]$ be a weighted blow-up graph of $H_1:H_2$ with
  edge density $\{\gamma_e\}$. Let
$$R_1=\left\{ v\in A_{u_1=u_2}\ |\  \mbox{ $v$ can be extended to a
    transversal  $H_1\subset G[H_1]$}\right\} $$
and 
$$R_2=\left\{ v\in A_{u_1=u_2}\ |\  \mbox{ $v$ can be extended to a
    transversal  $H_2\subset G[H_2]$}\right\} .$$
We show that
$$\sum_{v\in R_1}w(v)> 1-m_1\ \ \mbox{and}\ \ \sum_{v\in R_2}w(v)> 1-m_2.$$
But then since $m_1+m_2\leq 1$ there would be some $v\in R_1\cap R_2$ which we
could extend to a transversal of $H_1$ and $H_2$ as well and thus we could find
a transversal $H_1:H_2$.
Naturally, it is enough to prove that $\sum_{v\in R_1}w(v)> 1-m_1$, because
of the symmetry. We prove it by contradiction. Assume that  $\sum_{v\in
  R_1}w(v)=1-t\leq 1-m_1$. Let us erase all vertices belonging to $R_1$ from
$A_{u_1=u_2}$ and let us give the weight $\frac{w(u)}{t}$ to the remaining
vertices $u\in A_{u_1=u_2}-R_1$. Then we obtained a weighted blow-up graph
$G'[H_1]$ in which every edge density is at least $\gamma'_e$ $(e\in E(H_1))$.
But then the assumption of the lemma ensures the existence of a transversal
$H_1$ which contradicts the construction of $G'[H_1]$. 
\end{proof} 

Now we are ready to prove that the above given construction is best possible.

\begin{counte} \label{bow-tie1}
 For graph $H$, let $V(H)=\{v_1,v_2,v_3,v_4,v_5\}$, and
$E(H)=\{v_1v_2,v_1v_3,v_1v_4,v_1v_5,v_2v_3,v_4v_5\}$. Furthermore, assume that 
 the edge densities of the blow-up graph $G[H]$ satisfy the following
inequalities: $\gamma_{12},\gamma_{13},\gamma_{14},\gamma_{15}\geq 0,85$,
$\gamma_{23},\gamma_{45}\geq 0,51$ and at least one of the inequalities is
strict. Then $G[H]$ contains $H$ as a transversal.
\end{counte} 

\begin{proof} We can assume by symmetry that at least one of the strict
  inequalities 
  $\gamma_{12}> 0,85$ or $\gamma_{23}>0,51$ holds. Let us apply 
  Lemma~\ref{dens_glue} with 
$H_1=H(v_1,v_2,v_3)$ and $H_2=H(v_1,v_4,v_5)$, $u_1=u_2=v_1$, densities
$\gamma_{ij}$ and $m_1=1/2-\varepsilon$, $m_2=1/2+\varepsilon$ where
$\varepsilon$ is a very small positive number chosen later.
Then 
$$\gamma'_{ij}\gamma'_{jk}+\gamma_{ik}-1=1-r'_{12}-r'_{13}-r'_{23}+r'_{ij}r'_{jk}>0$$ 
for any permutation $i,j,k$ of  $\{1,2,3\}$. Indeed, since
$0,3=\frac{0,15}{0,5}$ we have
$$1-0,3-0,3-0,49+0,3\cdot 0,49>1-0,3-0,3-0,49+0,3\cdot 0,3=0$$
and one of the $r_{ij}$'s is strictly smaller than $0,3$ or $0,49$ and so for
small enough $\varepsilon$, the expression
$1-r'_{12}-r'_{13}-r'_{23}+r'_{ij}r'_{jk}$ is positive. Hence by
Lemma~\ref{triangle1} it ensures the existence of a triangle transversal.
For the other triangle, $r'_{14}=\frac{r_{14}}{1/2+\varepsilon}<0,3$ and
similarly, $r'_{15}<0,3$ and $r_{45}\leq 0,49$. Again by Lemma~\ref{triangle1}
it ensures the existence of a triangle transversal. By Lemma~\ref{dens_glue}
we obtain that there exists a transversal $H$ in $G[H]$.
\end{proof}

\begin{prop} There is no weighted blow-up graph of the bow-tie arising from
  star decomposition which is at least as good as the weighted blow-up graph
  in the Figure 7.
\end{prop}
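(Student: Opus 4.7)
The plan is to combine Counterexample~\ref{bow-tie1} (which reduces the problem to the tight-density case) with an explicit parameterisation of all star decompositions, and then verify infeasibility by a case analysis over the proper labelings of the bow-tie. Because a star decomposition is bow-tie-free by construction, any star decomposition with densities at least those of Figure~7 and at least one strict inequality would, by Counterexample~\ref{bow-tie1}, contain a bow-tie transversal, a contradiction. Therefore any star decomposition ``at least as good as Figure~7'' must realise every edge density \emph{exactly} equal to the target value ($0.85$ or $0.51$ according to the edge), and the task reduces to showing that no star decomposition achieves this tight profile.

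I parameterise a star decomposition by a proper labeling $f$ of $V(H)$ and by weights $u_{ab}$, one per non-seed edge, where $u_{ab}$ denotes the weight of the vertex added to cluster $A_{f(a)}$ at the step when $f(b)$ is introduced. The density along each edge then admits a closed-form product expression, and tightness gives one equation per edge. The bow-tie has an order-$8$ automorphism group, and because $v_1$ lies in every connected induced subgraph of size $\ge 3$, a direct count shows there are only $7$ equivalence classes of proper labelings --- three with $v_1$ at position~$1$, three with $v_1$ at position~$2$, and one with $v_1$ at position~$3$; it suffices to verify infeasibility for one representative of each class and for each admissible seed size.

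For the canonical base (the density-$0$ seed on the initial edge $H_2=\{f(1),f(2)\}$) the tight system has $|E(H)|-1=5$ free parameters but $|E(H)|=6$ equations. In each of the seven cases I solve the five ``downstream'' equations in reverse order of vertex addition, substitute into the remaining seed-edge equation, and obtain a genuine mismatch: for example, in the class of $f=(v_1,v_2,v_3,v_4,v_5)$ the downstream weights $u_{15}=0.15$, $u_{23}=u_{45}=0.49$, $u_{14}\approx 0.346$, $u_{13}\approx 0.270$ force the non-density along $v_1v_2$ to be $(1-u_{13})(1-u_{14})(1-u_{15})(1-u_{23})\approx 0.207>0.15$, i.e.\ $d(A_1,A_2)\approx 0.793<0.85$; in the single class with $v_1$ at position~$3$ the seed edge is $v_2v_3$ and the same procedure yields $(1-u_{12})(1-u_{13})\approx 0.533>0.49$, i.e.\ $d(A_2,A_3)\approx 0.467<0.51$. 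To exclude larger seeds $H_k$ with $k\in\{3,4\}$, note that any such $H_k$ must contain one of the triangles $T_1=v_1v_2v_3$ or $T_2=v_1v_4v_5$; reverse-engineering the tight constraints through the rescalings determines the required seed densities on this triangle, and they violate the triangle condition $\alpha\beta+\gamma\le 1$ of Lemma~\ref{triangle1} (with Lemma~\ref{dens_glue} used via an appropriate split $m_1+m_2\le 1$ to transfer the forcing from ``triangle plus pendant'' in the $H_4$ case), so no $H_k$-transversal-free seed exists. The principal obstacle is the bookkeeping: confirming uniformly across all seven classes that the downstream rescalings always produce the required obstruction, while the precise edge carrying the mismatch depends on the class.
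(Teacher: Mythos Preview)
Your overall plan matches the paper's: use Counterexample~\ref{bow-tie1} to force exact equality of all six densities, then do a finite case analysis over the star decompositions. The paper's proof is just this sketch plus the remark that the computation is routine.

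Two points where you over-complicate things. First, your count of seven labeling classes is correct \emph{as labelings}, but it is not the right invariant: the star-decomposition complement is a union of stars whose shape depends only on the seed edge $\{f(1),f(2)\}$ and, for each later vertex $f(k)$, on the set $\{f(i):i<k,\ f(i)f(k)\in E(H)\}$ --- not on the linear order among already-placed vertices. All six of your classes with $v_1$ in position $1$ or $2$ (and any order among the remaining vertices) produce the \emph{same} complement and hence the same family of weighted blow-ups; only the class with $v_1$ at position $3$ is genuinely different. That is why the paper says ``two star decompositions'': the figure shows exactly these two complement structures. Your seven computations therefore collapse to two, and six of them are redundant re-parametrisations of the same system.

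Second, the discussion of ``larger seeds $H_3,H_4$'' is unnecessary and rests on a misreading of the definition. A star decomposition, as defined at the start of Section~5, recurses all the way down; its complement consists entirely of stars. There is no freedom to stop at a nontrivial $H_k$-free seed, so there is nothing to exclude and the appeal to Lemmas~\ref{triangle1} and~\ref{dens_glue} at this stage is superfluous. Once you drop that paragraph and merge the six equivalent labelings into one, your argument is exactly the paper's.
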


\begin{proof} Because of the symmetry and since we only need to consider the
  star decompositions where the labeling is proper, we only have to consider
  two star decompositions. Because of
  Statement~\ref{bow-tie1}, all edge densities must be exactly the required
  one. This makes the whole computation a routine work.
 
 \begin{figure}[h!] \label{bow-ties}
\begin{center}
\scalebox{.65}{\includegraphics{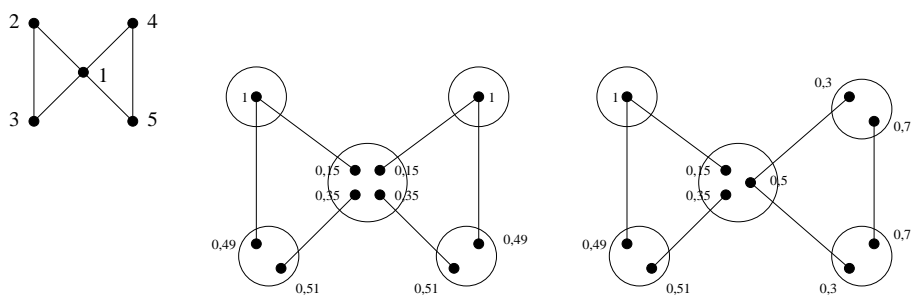}}\caption{Star decompositions of
  bow-ties.}    
\end{center}
\end{figure}

\end{proof} 

\subsection{Complete bipartite graph case}\ \ 
\bigskip

Let $d_{crit}(K_{n,m})=d(n,m)$ be the critical edge density of the complete
bipartite graph $K_{n,m}$. Let $d_s(n,m)$ be the best edge density coming from
the star decomposition ($s$ stands for star in $d_s$). 

If one starts to do the star decomposition to $K_{n,m}$ then
the following recursion holds:
$$d_s(n,m)=\frac{1}{2-d_s(n,m-1)}\ \ \  \mbox{or}\ \ \  \frac{1}{2-d_s(n-1,m)}$$
according to which class contains the vertex $f(n+m)$. Although we have two
possibilities the recursion has only one solution, namely 
$$d_s(n,m)=1-\frac{1}{n+m-1}$$
since $d(1,1)=d_s(1,1)=0$. From this we already gain an interesting fact.

\begin{theorem} \label{BT1} For any proper labeling $f$ of the graph $K_{n,m}$
  the tree $T_f(K_{n,m})$ has spectral radius $\sqrt{n+m-1}$.  
\end{theorem}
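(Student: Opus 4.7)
The plan is to combine the recursion derived immediately before Theorem~\ref{BT1} with the spectral interpretation of the star decomposition provided by Corollary~\ref{SCT1}. The recursion shows that for every proper labeling $f$ of $K_{n,m}$, the star decomposition attached to $f$ yields an edge density
$$d_s(n,m,f)=1-\frac{1}{n+m-1},$$
independently of which side of the bipartition the last vertex $f(n+m)$ happens to lie on; this is because both branches of the recursion, $d_s(n,m,f)=\frac{1}{2-d_s(n,m-1,f')}$ and $d_s(n,m,f)=\frac{1}{2-d_s(n-1,m,f')}$, have the same unique solution with base case $d_s(1,1)=0$, regardless of the choices made at each step.

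Next, I would invoke Corollary~\ref{SCT1} together with its subsequent remark, which give the spectral interpretation. The corollary tells us that the star decomposition with labeling $f$ delivers a blow-up graph with uniform density at least $1-\frac{1}{\lambda(T_f(H))^2}$, and the remark explains that weighting the clusters using the components of the Perron eigenvector of $T_f(H)$ (in the spirit of G\'acs's construction) in fact attains this density with equality: every edge density in the resulting weighted blow-up is exactly $1-\frac{1}{\lambda(T_f(H))^2}$. Applied to $H=K_{n,m}$ this means that the density produced by the star decomposition with labeling $f$ is precisely $1-\frac{1}{\lambda(T_f(K_{n,m}))^2}$.

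Putting the two together, for every proper labeling $f$ of $K_{n,m}$ we obtain
$$1-\frac{1}{\lambda(T_f(K_{n,m}))^2}=1-\frac{1}{n+m-1},$$
which rearranges to $\lambda(T_f(K_{n,m}))=\sqrt{n+m-1}$, establishing the theorem.

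The one point that deserves care is the equality (not merely the inequality from Corollary~\ref{SCT1}) between the star-decomposition density and $1-\frac{1}{\lambda(T_f)^2}$: it is this equality that lets us translate the recursion for $d_s$ into an identity on spectral radii. This is exactly the content of the Perron-eigenvector weighting described after Corollary~\ref{SCT1}, which transfers the identity $\lambda x_v=\sum_{u\sim v}x_u$ on $T_f$ into a density identity in the blow-up; once this is pinned down, the theorem is essentially a rewriting of the recursion observed just above.
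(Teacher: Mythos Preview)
Your approach is exactly the one the paper has in mind: the theorem is stated immediately after the recursion for $d_s(n,m)$ with the words ``From this we already gain an interesting fact,'' so the intended argument is precisely to combine the recursion (which gives $d_s(n,m,f)=1-\tfrac{1}{n+m-1}$ for every proper labeling $f$) with the identification of $d_s(n,m,f)$ as $1-\tfrac{1}{\lambda(T_f(K_{n,m}))^2}$.

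There is, however, one small gap in how you justify the equality $d_s(n,m,f)=1-\tfrac{1}{\lambda(T_f)^2}$. The Perron-eigenvector weighting from the remark after Corollary~\ref{SCT1} shows that the star decomposition \emph{achieves} uniform density $1-\tfrac{1}{\lambda(T_f)^2}$, hence $d_s(n,m,f)\ge 1-\tfrac{1}{\lambda(T_f)^2}$. Plugging in the recursion value then only yields $\lambda(T_f)\le\sqrt{n+m-1}$. For the reverse inequality you also need that the star decomposition cannot do \emph{better} than $1-\tfrac{1}{\lambda(T_f)^2}$, i.e.\ $d_s(n,m,f)\le d_{crit}(T_f)$. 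This is not a consequence of the Perron weighting; it comes from the proof of Theorem~\ref{ST1}, where the iterative star-decomposition step is shown to be exactly the inverse of the leaf-deletion step of Theorem~\ref{TT1} applied to $T_f(H)$. That correspondence is bijective, so a uniform-density star decomposition of $H$ with labeling $f$ exists if and only if the corresponding tree blow-up of $T_f(H)$ exists, giving $d_s(n,m,f)=d_{crit}(T_f(K_{n,m}))=1-\tfrac{1}{\lambda(T_f)^2}$ with equality. Once you invoke this (rather than only the eigenvector remark), your argument is complete and coincides with the paper's.
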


\begin{remark} In this case a proper labeling simply means that $f(1)$ and
  $f(2)$ are elements of  different classes in the bipartite graph. \\
For different proper labelings these trees can look very different, but as
the theorem shows their spectral radii are the same. In fact, it turns out
that not only their spectral radius, but all their eigenvalues are of the form
$\pm\sqrt{n}$ where $n$ is a non-negative integer. These are the same trees
defined in the paper \cite{csiki}. 
\end{remark} 

\begin{conj} \label{BC1} $d_{crit}(K_{n,m})=d_s(n,m)=1-\frac{1}{n+m-1}.$
\end{conj}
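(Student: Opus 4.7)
The lower bound $d_{crit}(K_{n,m})\geq 1-\tfrac{1}{n+m-1}$ follows immediately from what is already proved: Theorem~\ref{BT1} gives $\lambda(T_f(K_{n,m}))^2=n+m-1$ for every proper labeling $f$, so Corollary~\ref{SCT1} produces a weighted star-decomposition blow-up that attains density exactly $1-\tfrac{1}{n+m-1}$ on every edge and contains no $K_{n,m}$ transversal. The task is therefore the matching upper bound: whenever $\gamma_e>1-\tfrac{1}{n+m-1}$ for all $e\in E(K_{n,m})$, the blow-up must contain a $K_{n,m}$ transversal.

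The natural plan is induction on $n+m$. The base cases $K_{1,m}=S_{m+1}$ are covered by the tree result of Section~3, and $K_{2,2}=C_4$ is covered by Theorem~\ref{circle}. For the inductive step, let $G[K_{n,m}]$ be a blow-up with every density strictly above $1-\tfrac{1}{n+m-1}$; by Theorem~\ref{ZolTh1} we may assume every cluster is finite. Pick a vertex $b$ in the $m$-part, let $A_b$ be its cluster, and, following the blueprint of the proof of Theorem~\ref{TT1}, consider the ``reachability set''
\[ R=\{\,v\in A_b\;:\;\text{$v$ has a neighbour in every cluster $A_a$ with $a\in N(b)$}\,\}. \]
A double-counting argument along the lines of the tree proof should yield a lower bound on the total weight of $R$ and rescale the residual blow-up of $K_{n,m-1}$ (obtained by deleting $A_b$ together with the $A_b\setminus R$ parts of the edge sets) so that its densities exceed the inductive threshold $1-\tfrac{1}{n+m-2}$. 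The inductive hypothesis then produces a $K_{n,m-1}$ transversal which, by the definition of $R$, extends through any vertex of $R$ to a $K_{n,m}$ transversal.

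The main obstacle --- and the reason $K_{n,m}$ is genuinely harder than trees --- is the absence of pendant vertices: the $n$ clusters on the opposite side of $b$ interact through the many $4$-cycles in $K_{n,m}$, so the rescaled residual densities depend in a coupled, non-multiplicative way on $R$, not just on a single edge. Counterexample~\ref{bow-tie1} already warns that a naive single-vertex reduction can lose the right constants. I see two plausible ways past this. The first is to refine the reachability argument by choosing $R$ more carefully (e.g.\ by a weighted greedy selection that simultaneously controls the residual densities to all $n$ opposite clusters), perhaps combined with the Scott--Sokal bound of Theorem~\ref{C1} applied to the residual graph. The second, which I find more likely to succeed, is to prove a rigidity statement for extremal weighted blow-ups: using the compactness supplied by Theorem~\ref{ZolTh1}, any extremal construction at the critical density of $K_{n,m}$ should be rank-$1$ on each cluster (one ``tip'' vertex plus one ``bulk'' vertex), and a direct eigenvector calculation on the resulting two-parameter family will recover the star-decomposition value $\sqrt{n+m-1}$ of Theorem~\ref{BT1}. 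Establishing this rigidity --- essentially a uniqueness statement at a vertex of the density polytope --- is, in my view, the principal technical hurdle to resolving Conjecture~\ref{BC1}.
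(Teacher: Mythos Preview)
The statement you are attempting to prove is labeled in the paper as a \emph{conjecture}, not a theorem, and the paper contains no proof of it. The authors explicitly write ``the authors have the feeling that Conjecture~\ref{BC1} is true while Conjecture~\ref{SC2} may not hold,'' and they discuss what would follow \emph{if} the conjecture holds; the upper bound $d_{crit}(K_{n,m})\leq 1-\tfrac{1}{n+m-1}$ is left entirely open. So there is no paper proof to compare your proposal against.

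Your treatment of the lower bound is correct and is exactly the content the paper does establish: the star-decomposition construction together with Theorem~\ref{BT1} and Corollary~\ref{SCT1} gives $d_{crit}(K_{n,m})\geq 1-\tfrac{1}{n+m-1}$. You are also right that the entire difficulty lies in the upper bound, and to your credit you do not pretend otherwise: your text is an honest research outline, not a proof. The specific inductive reduction you sketch (delete a vertex $b$, control the reachability set $R$, rescale) is precisely the mechanism behind Theorem~\ref{TT1} and Lemma~\ref{dens_glue}, but as you yourself note, the coupling across the $n$ clusters opposite $b$ prevents the single-edge bookkeeping from closing. Neither of your two suggested escapes (a refined reachability argument, or a rigidity/rank-one classification of extremal blow-ups) is carried out, and the paper gives no indication that either is known to work; indeed, the bow-tie counterexample shows that local reductions of this type can genuinely fail to recover the correct constant. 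In short: your proposal correctly isolates the open problem and plausible lines of attack, but it is not a proof, and neither is anything in the paper.
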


\begin{remark}  Conjecture~\ref{SC2} clearly implies  Conjecture~\ref{BC1},
  but the authors have the feeling that Conjecture~\ref{BC1} is true while
  Conjecture~\ref{SC2} may not hold.
\end{remark}

If Conjecture \ref{BC1} holds it would have an interesting consequence.
In the case of trees and cycles the extremal construction is unique, and so it
is conjectured about the complete graphs. However, this would not stand in the
case of complete bipartite graphs; there would be several different types of
constructions depending on the proper labeling, see Figure 8.

\begin{figure}[h!]
\begin{center}
\scalebox{.75}{\includegraphics{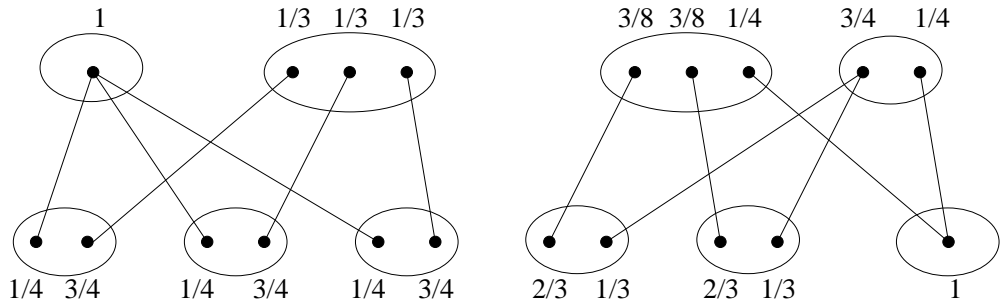}}   \caption{Two constructions
  for $G=K_{2,3}$ attaining $d_s(2,3)$}  
\end{center}
\end{figure} 
\bigskip

\noindent \textbf{Acknowledgment.} We are very grateful to the anonymous
referee for many helpful comments and remarks improving the presentation
of this paper.




\end{document}